\DeclareFontFamily{U}{wncy}{}
\DeclareFontShape{U}{wncy}{m}{n}{<->wncyr10}{}
\DeclareSymbolFont{mcy}{U}{wncy}{m}{n}
\DeclareMathSymbol{\Sha}{\mathord}{mcy}{"58}
\DeclareMathSymbol{\sha}{\mathord}{mcy}{"78}
\newtheorem{thm}{Theorem}[section]
\newtheorem{cor}[thm]{Corollary}
\newtheorem{lem}[thm]{Lemma}
\newtheorem{prop}[thm]{Proposition}
\newtheorem{defin}[thm]{Definition}
\newtheorem{exam}[thm]{Example}
\newtheorem{examples}[thm]{Examples}
\newtheorem{rem}[thm]{Remark}
\newtheorem{conj}[thm]{Conjecture}
\newtheorem*{acknowledgment}{Acknowledgment}
\numberwithin{equation}{section}
\newcommand{\F}{\mathbb{F}}
\newcommand{\K}{\mathbb{K}}
\newcommand{\Z}{\mathbb{Z}}
\newcommand{\calG}{\mathcal{G}}
\newcommand{\op}{{\mathrm{op}}}
\newcommand{\sV}{{\mathcal V}}
\newcommand{\sE}{{\mathcal E}}
\newcommand{\calL}{{\mathcal L}}
\newcommand{\calY}{{\mathcal Y}}
\newcommand{\calS}{{\mathcal S}}
\newcommand{\calB}{{\mathcal B}}
\begin{document}

\title[RAAGs and enhanced Koszul properties]{Right-angled Artin groups \\ and enhanced Koszul properties}

\author{A. Cassella \and C. Quadrelli}

%\dedicatory{Shall we dedicate it?}

\address{Department of Mathematics and Applications, University of Milano Bicocca, 20125 Milan, Italy EU}
\email{a.cassella@campus.unimib.it}
\email{claudio.quadrelli@unimib.it}
\date{\today}

\begin{abstract}
Let $\F$ be a finite field.
We prove that the cohomology algebra $H^\bullet(G_\Gamma,\F)$ with coefficients in $\F$
of a right-angled Artin group $G_\Gamma$ is a strongly Koszul algebra
for every finite graph $\Gamma$.
Moreover, $H^\bullet(G_\Gamma,\F)$ is a universally Koszul algebra if, and only if, the graph $\Gamma$
associated to the group $G_\Gamma$ has the diagonal property.
From this we obtain several new examples of pro-$p$ groups, for a prime number $p$,
whose continuous cochain cohomology algebra with coefficients in the field of $p$ elements
is strongly and universally (or strongly and non-universally) Koszul. This provides 
new support to a conjecture on Galois cohomology of maximal pro-$p$ Galois groups of fields
formulated by J.~Min\'a\v{c} et al.
\end{abstract}

\subjclass[2010]{Primary 16S37; Secondary 05C25, 12G05, 20E18, 20F36}

\keywords{Koszul algebras, Right-angled Artin groups, Galois cohomology, maximal pro-$p$ Galois groups, enhanced Koszul properties, 
elementary type conjecture}

\maketitle
%%%%%%%%%%%%%%%%%%%%%%%%%%%%%%%%%%%%%%%%%%%%%%%%%

\section{Introduction}
\label{sec:intro}

Right-angled Artin groups --- RAAGs for short --- are a combinatorial construction that has played 
a prominent role in geometric group theory in the last decades.
A RAAG is defined by a presentation where all relations are commutators of weight 2 of the generators,
which comes equipped with a combinatorial graph whose vertices are the generators, and two vertices 
are joined by an edge whenever they commute.
RAAGs may seem the most elementary class among Artin groups, yet such groups have surprising richness
an flexibility, and this led to some remarkable applications.
(For an overview on RAAGs we refer to \cite{raags}.)

In the present paper we investigate {\sl enhanced Koszul properties} for the cohomology of finitely generated RAAGs.
It is well known that the cohomology algebra $H^\bullet(G_\Gamma,\F)=\bigoplus_{n\geq0}H^n(G_\Gamma,\F)$
of a RAAG $G_\Gamma$ with associated graph $\Gamma$, with coefficients in a finite field $\F$ (considered as trivial $G_\Gamma$-module) and endowed with
the graded-commutative cup-product
\[
 H^r(G_\Gamma,\F)\otimes_{\F} H^s(G_\Gamma,\F)\overset{\cup}{\longrightarrow} H^{r+s}(G_\Gamma,\F),\qquad r,s\geq0,
\]
is isomorphic to the {\sl exterior Stanley-Reisner algebra} induced by $\Gamma$, and thus it is
a {\sl quadratic algebra}, i.e., a graded algebra which is generated by elements of degree 1,
and with homogeneous defining relations of degree 2.
By a result of R.~Fr\"oberg, the algebra $H^\bullet(G_\Gamma,\F)$ is also {\sl Koszul} (cf. \cite{froberg} and \cite{papa}).
The Koszul property for quadratic algebras was singled out by S.~Priddy in \cite{priddy},
and yields exceptionally nice behavior in terms of cohomology
(cf. Definition~\ref{defin:koszul} below and \cite{pp:quad}*{Ch.~2}).
Koszul property is very restrictive, still it arises in various areas of mathematics, such as representation
theory, algebraic geometry, combinatorics, and Galois theory.

Recently, some stronger versions of the Koszul property were introduced and investigated in commutative
algebra (see, e.g., \cites{conca:UK,HHR,cdr,ehh}) and extended to the non-commutative setting (cf. \cite{piont}), 
and finally considered in the context of Galois cohomology (cf. \cite{MPPT,cq:2rel}).
In particular, one has the notion of {\sl strongly Koszul} algebra and {\sl universally Koszul} algebra.
These two ``enhanced versions'' of Koszulity are independent to each other, and imply the ``simple'' Koszulity
(see \S~\ref{ssec:K} and \cite{MPPT}*{\S~2}).
Usually, checking whether a given quadratic algebra is Koszul is a rather hard problem.
Surprisingly, testing these enhanced versions of the Koszul property may be easier, even though they are more restrictive.

For RAAGs we prove the following.

\begin{thm}\label{thm:strong}
 Let $\Gamma$ be a finite combinatorial graph and $G_\Gamma$ the associated RAAG, and let $\F$ be a finite field.
The cohomology algebra $H^\bullet(G_\Gamma,\F)$ is strongly Koszul.
\end{thm}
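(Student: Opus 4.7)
The plan is to work directly with the explicit description of $H^\bullet(G_\Gamma,\F)$ as the exterior Stanley-Reisner algebra $\Lambda_\Gamma$. Fix a total order on $V(\Gamma)$; in degree $1$ the algebra $\Lambda_\Gamma$ has the natural $\F$-basis $\{e_v:v\in V(\Gamma)\}$, and a monomial $\F$-basis of the whole algebra is given by the ordered products $e_W=\prod_{v\in W}e_v$ as $W$ ranges over the cliques of $\Gamma$. I propose to verify the strongly Koszul condition with respect to this distinguished basis, namely that for every finite subset $\{v_1,\dots,v_r\}\subset V(\Gamma)$ and every $w\in V(\Gamma)$ outside this subset, the colon ideal $(e_{v_1},\dots,e_{v_r}):(e_w)$ is generated by a subset of $\{e_v:v\in V(\Gamma)\}$.

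Set $Y=\{e_{v_1},\dots,e_{v_r}\}$ and let $N(w)$ denote the open neighbourhood of $w$ in $\Gamma$. The candidate generating set is
\[
S\;=\;Y\;\cup\;\{e_w\}\;\cup\;\{e_u:u\neq w,\ u\notin N(w)\},
\]
producing a linear ideal $J=(S)\subseteq\Lambda_\Gamma$. The inclusion $J\subseteq (Y):(e_w)$ is immediate on generators: elements of $Y$ trivially belong to the colon, and every other generator $e_u\in S$ satisfies $e_u\cdot e_w=0$ in $\Lambda_\Gamma$ (either because $u=w$ and $e_w^2=0$, or because $\{u,w\}\notin E(\Gamma)$ forces $e_u\,e_w=0$ by the Stanley-Reisner relations).

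The reverse inclusion is the heart of the argument. Take $\alpha\in(Y):(e_w)$ and expand $\alpha=\sum_W c_W\,e_W$ in the monomial basis of $\Lambda_\Gamma$. Split the sum as $\alpha=\alpha_J+\alpha'$, where $\alpha_J$ collects those terms whose indexing clique $W$ either meets $\{v_1,\dots,v_r\}$, contains $w$, or contains a vertex outside $N(w)\cup\{w\}$; each such $e_W$ is divisible by one of the linear generators of $J$, so $\alpha_J\in J$. The cliques $W$ indexing the remaining summand $\alpha'$ are disjoint from $\{v_1,\dots,v_r,w\}$ and are contained in $N(w)$; in particular $W\cup\{w\}$ is again a clique of $\Gamma$, so $e_W\cdot e_w=\pm e_{W\cup\{w\}}$ is a non-zero basis monomial whose indexing clique is disjoint from $\{v_1,\dots,v_r\}$ and therefore does not appear in the monomial basis of $(Y)$. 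As $W$ varies over the non-$J$ cliques the monomials $e_{W\cup\{w\}}$ are pairwise distinct, so the identity $\alpha'\cdot e_w=\alpha\cdot e_w-\alpha_J\cdot e_w\in (Y)$ forces $\alpha'\cdot e_w=0$, and hence $\alpha'=0$. Therefore $\alpha=\alpha_J\in J$, giving $(Y):(e_w)=J$.

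I do not foresee a deep conceptual obstacle. The main delicate point is the final linear-independence step, which exploits that the monomial basis of $\Lambda_\Gamma$ is indexed \emph{precisely} by the cliques of $\Gamma$, so that multiplication by $e_w$ sends the distinct non-$J$ monomials to distinct non-zero monomials lying outside the linear span of the monomial basis of $(Y)$. It is this combinatorial rigidity of the Stanley-Reisner structure---rather than the bare Koszulity of $\Lambda_\Gamma$ provided by Fr\"oberg's theorem---that secures strong Koszulity for every graph $\Gamma$.
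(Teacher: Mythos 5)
Your proposal is correct and follows essentially the same route as the paper's own proof: the same candidate linear generating set $\calS'\cup\calS''$ (your $S=Y\cup\{e_w\}\cup\{e_u: u\neq w,\ u\notin N(w)\}$), the same reliance on the clique monomial basis and on the fact that an ideal generated by degree-one monomials is spanned by the clique monomials meeting its generating vertices, and the same key step that multiplication by $e_w$ sends the remaining monomials injectively to distinct nonzero basis monomials lying outside $(Y)$. The only cosmetic difference is that the paper argues contrapositively (an element outside $(\calS)$ is shown to have product with $a_{i_r}$ outside $I$), whereas you split an element of the colon ideal and show the residual part vanishes.
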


RAAGs {\sl of elementary type} are the RAAGs which are constructible starting from free abelian groups
and taking direct products with $\Z$ and free products (see Definition~\ref{defin:ETRAAGs}).
Equivalently, $G_\Gamma$ is of elementary type if $\Gamma$ has the diagonal property --- i.e., $\Gamma$
does not contain squares or length-3 paths as full subgraphs (see \cite{wolk} and Proposition~\ref{thm:ETRAAGs}).
E.g., if $\Gamma$ is complete or a star, then $G_\Gamma$ is of elementary type.
This property characterizes those RAAGs whose cohomology is universally Koszul.

\begin{thm}\label{thm:univ}
 Let $\Gamma$ be a finite combinatorial graph and $G_\Gamma$ the associated RAAG, and let $\F$ be a finite field.
The cohomology algebra $H^\bullet(G_\Gamma,\F)$ is universally Koszul if, and only if, $G_\Gamma$
is of elementary type.
\end{thm}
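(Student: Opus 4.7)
The plan is to establish both implications via Proposition~\ref{thm:ETRAAGs}, which recasts the elementary-type condition as the purely combinatorial statement that $\Gamma$ contains neither an induced $4$-cycle $C_4$ nor an induced length-$3$ path $P_4$.

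For the implication $(\Leftarrow)$, I would argue by induction on the recursive construction of the elementary-type RAAG. The base case is $G_\Gamma = \Z^n$, whose cohomology algebra is the exterior algebra $\bigwedge \F^n$; every ideal of $\bigwedge \F^n$ generated by linear forms has, after a change of basis, the form $\bigwedge(\F^n)\cdot W$ for a subspace $W\subseteq \F^n$, and the quotient is again an exterior algebra, hence Koszul. The inductive step then reduces to verifying that universal Koszulity is preserved under the two algebraic operations corresponding to the elementary constructions on the group side: tensoring the cohomology algebra with $\F[\epsilon]/(\epsilon^2)$ (for the direct product with $\Z$), and the $\F$-amalgamated coproduct of quadratic algebras (for the free product). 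Each closure statement should follow by explicitly decomposing an arbitrary linear ideal of the new algebra in terms of linear ideals in the factors and invoking the inductive hypothesis.

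For the implication $(\Rightarrow)$, I would appeal to Conca's characterization of universal Koszulity: a quadratic algebra $A$ is universally Koszul if and only if, for every $a \in A_1$ and every ideal $I$ generated by linear forms, the colon ideal $I:a$ is again generated by linear forms. Assuming $\Gamma$ contains an induced $P_4$ on vertices $v_1, v_2, v_3, v_4$ (respectively an induced $C_4$), I would exhibit a specific linear form $a$ violating this condition. Concretely, in the $P_4$ case, take $a = x_1 + x_4$: a direct computation shows that the degree-$1$ part of the annihilator of $a$ is $\mathrm{span}(x_1, x_4)$, while the product $x_2 x_3$ lies in the annihilator of $a$ (using the non-edge relations $x_1 x_3 = 0$ and $x_2 x_4 = 0$) yet does not belong to the degree-$2$ part of the ideal generated by $x_1$ and $x_4$. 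For the $C_4$ case, the analogous choice $a = x_i + x_j$ for an edge $\{v_i,v_j\}$ of the cycle yields an obstruction of the same type.

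The main obstacle is to ensure in the $(\Rightarrow)$ direction that the obstruction persists inside the full algebra $H^\bullet(G_\Gamma, \F)$, and not merely in the subalgebra corresponding to the offending induced subgraph. This calls for a monomial argument specific to exterior Stanley-Reisner rings: one must verify that the obstructing quadratic element cannot be expressed as a sum of products of $x_1$ and $x_4$ with arbitrary linear forms, even when generators corresponding to vertices of $\Gamma$ outside the bad subgraph are permitted. For the $(\Leftarrow)$ direction, the closure under the two elementary operations is conceptually routine but requires careful tracking of how linear forms decompose through the factorization --- an analysis parallel to, though strictly more demanding than, the one used to establish strong Koszulity in Theorem~\ref{thm:strong}.
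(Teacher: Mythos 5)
Your proposal follows essentially the same route as the paper: the backward implication by structural induction (via Proposition~\ref{thm:ETRAAGs}) using closure of universal Koszulity under the direct product of quadratic algebras and under the wedge product with $\F[\epsilon]/(\epsilon^2)$, and the forward implication by showing that $\mathrm{Ann}(a_1+a_4)$ is not generated in degree one, since $a_2a_3$ annihilates $a_1+a_4$ while every linear form in the annihilator has vanishing coefficients on $a_2$ and $a_3$. The ``monomial argument'' you flag as the remaining obstacle is exactly the coefficient comparison the paper carries out, and it passes to the full algebra verbatim because the basis monomials $a_1a_2$ and $a_3a_4$ cannot be cancelled by contributions from vertices outside the offending subgraph.
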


Interestingly, Theorems~\ref{thm:strong} and \ref{thm:univ} provide plenty of examples of strongly Koszul
algebras which are not universally Koszul.

For a prime number $p$ and a graph $\Gamma$, let $\calG_\Gamma$ denote the
{\sl pro-$p$ completion} of the RAAG $G_\Gamma$, and let $\F_p$ denote the field with $p$ elements.
A result of K.~Lorensen states that the (continuous cochain) $\F_p$-cohomology algebra of $\calG_\Gamma$
coincides with the algebra $H^\bullet(G_\Gamma,\F_p)$ (cf. \cite{lorensen}).
Thus, Theorems~\ref{thm:strong} and \ref{thm:univ} yield several new examples of pro-$p$ groups with $\F_p$-cohomology
which is strongly and universally (or non-universally) Koszul,
in particular among {\sl generalized pro-$p$ RAAG}, a class of pro-$p$ groups introduced in \cite{QSV}
(see \S~\ref{ssec:prop}).

This has great relevance in the context of Galois theory.
Let $\K$ be a field containing a root of 1 of order $p$, and let $\calG_{\K}$ denote the {\sl maximal pro-$p$ Galois
group} of $\K$ --- i.e., $\calG_{\K}$ is the Galois group of the maximal pro-$p$-extension of $\K$.
In the last two decades, Koszulity has gained importance in Galois cohomology, thanks to the work of L.~Positselski,
especially in connection with the celebrated Bloch-Kato conjecture
(see, e.g., \cites{polivis,posi:koszul,posi:b}).
In particular, Positselski conjectured that the $\F_p$-cohomology algebra of a maximal pro-$p$ Galois group $\calG_{\K}$
is Koszul, and this was shown to be true in some relevant cases (cf. \cites{posi:conj,MPQT,cq:onerel}).
More recently, in \cite{MPPT} J.~Min\'a\v{c} et al. conjectured
that $\F_p$-cohomology algebra of a maximal pro-$p$ Galois group $\calG_{\K}$ is universally Koszul,
and proved this in some cases.

\begin{conj}{\cite{MPPT}*{Conj.~2}}\label{conj:univ}
 Let $\K$ be a field containing a root of 1 of order $p$, and suppose that $\calG_{\K}$ is finitely generated.
Then the $\F_p$-cohomology algebra of $\calG_{\K}$ is universally Koszul.
\end{conj}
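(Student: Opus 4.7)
The plan is to reduce Conjecture~\ref{conj:univ} to a combinatorially controlled class of pro-$p$ groups covered by the paper's RAAG theorems, via the Elementary Type Conjecture (ETC) for maximal pro-$p$ Galois groups. The ETC asserts that a finitely generated $\calG_{\K}$, with $\K$ containing a root of $1$ of order $p$, is built from free pro-$p$ groups and Demushkin groups by iterating free pro-$p$ products and cyclotomic semidirect products with $\Z_p$. Granting the ETC, universal Koszulity of $H^\bullet(\calG_{\K},\F_p)$ will follow once we establish (i) universal Koszulity for the elementary building blocks, and (ii) stability of universal Koszulity under the two operations used in the ETC.

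For (i), I would first observe that $H^\bullet(F,\F_p)$ for a free pro-$p$ group $F$ is concentrated in degrees $0$ and $1$, and such a quadratic algebra is trivially universally Koszul. For a Demushkin group $D$, the algebra $H^\bullet(D,\F_p)$ is a quadratic Poincar\'e duality algebra of dimension~$2$; a direct inspection of its lattice of ideals generated in degree $1$ should show that each admits a linear free resolution, via an argument analogous to the one used in the small cases of Theorem~\ref{thm:univ}.

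For (ii), the key is to transport the proof of Theorem~\ref{thm:univ} for elementary type RAAGs to the Galois setting. A free pro-$p$ product $\calG_1\ast\calG_2$ induces on $\F_p$-cohomology an algebra obtained as a coproduct of the two input algebras in the category of augmented graded $\F_p$-algebras, and the ideal-splitting argument used for free products of RAAGs should transfer verbatim. A cyclotomic semidirect product $\calG\rtimes\Z_p$ should induce, under the compatibility encoded by the cyclotomic character, a cohomology algebra of the form $H^\bullet(\calG,\F_p)\otimes\Lambda[\chi]$ with the exterior algebra on a single degree-$1$ class $\chi$; this is the direct Galois analogue of the $G_\Gamma\times\Z$ operation used in Theorem~\ref{thm:univ}, and the argument there should apply. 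Combining (i) and (ii), the ETC then propagates universal Koszulity up the whole tower. Unconditionally, this already gives the conjecture whenever $\calG_{\K}$ is known to be a generalized pro-$p$ RAAG of elementary type in the sense of \cite{QSV}, where Theorem~\ref{thm:univ} applies directly.

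The hard part is twofold. First, the ETC is a major open problem in its own right, so an unconditional proof along these lines can at best reach the cases where the ETC is known. Second, the cyclotomic semidirect product is genuinely subtler than the abstract direct product with $\Z$ used in the RAAG setting: the action of $\Z_p$ via the cyclotomic character introduces a Bockstein-type twist whose interaction with cup products must be controlled so as not to destroy the linearity of free resolutions of ideals generated in degree $1$. Handling this twist rigorously, and ruling out the Galois analogue of the square and length-$3$ path obstructions isolated by the diagonal property in Theorem~\ref{thm:univ}, is where the bulk of the technical work would lie.
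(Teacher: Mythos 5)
The statement you are proving is labelled as a \emph{conjecture} in the paper (it is \cite{MPPT}*{Conj.~2}), and the paper does not prove it; it only settles it for the subclass of maximal pro-$p$ Galois groups that are pro-$p$ RAAGs (Corollary~\ref{thm:RAAGs ET Galois}). Your proposal likewise does not prove it: the entire argument is conditional on the Elementary Type Conjecture, which, as you yourself note, is a major open problem. A reduction of one open conjecture to another is not a proof, so the proposal has an irreparable gap at its very first step. Moreover, the conditional part of your plan is not new: universal Koszulity of $H^\bullet(\calG,\F_p)$ for pro-$p$ groups of elementary type (free pro-$p$ groups, Demushkin groups, closure under free pro-$p$ products and cyclotomic semidirect products with $\Z_p$) is essentially the content of \cite{MPPT} itself --- the closure statements are exactly Propositions~\ref{prop:directprod UK} and \ref{prop:wedgeprod UK} here, and the ``Bockstein-type twist'' you worry about disappears at the level of $\F_p$-cohomology, where the cyclotomic semidirect product yields precisely the wedge product $H^\bullet(\calG,\F_p)\wedge\Lambda_\bullet(\F_p\chi)$ handled by Proposition~\ref{prop:wedgeprod UK}.

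What the paper actually contributes toward the conjecture goes through an entirely different, and unconditional, route that your proposal does not touch. The key is the two-sided graph-theoretic classification of Theorem~\ref{thm:RAAGs UK}: $\Lambda_\bullet(\Gamma^{\op})$ is universally Koszul if and only if $\Gamma$ has the diagonal property, where the ``only if'' direction is proved by exhibiting, for any full subgraph $C_4$ or $P_4$, the explicit element $b=a_1+a_4$ whose annihilator ideal $(0):(b)$ contains $a_2a_3$ in degree $2$ but is not generated in degree $1$. This is then combined with realizability results (Proposition~\ref{prop:Galois} for diagonal graphs; \cite{cq:bk}*{Thm.~5.6} and \cite{SZ}*{Thm.~1.2} for the non-realizability of $\calG_{C_4}$ and $\calG_{P_4}$ as maximal pro-$p$ Galois groups) to conclude that the pro-$p$ RAAGs with non-universally-Koszul cohomology are exactly the ones that cannot occur as $\calG_{\K}$, so they furnish no counterexamples. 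If you want to contribute to Conjecture~\ref{conj:univ} along the lines of this paper, the productive direction is to enlarge the class of groups for which both the Koszulity dichotomy and the (non-)realizability can be established unconditionally, not to assume the ETC.
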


If $\Gamma$ is a graph with the diagonal property, then it is well known that the pro-$p$ RAAG $\calG_\Gamma$ associated to $\Gamma$ occurs as the maximal pro-$p$ Galois group $\calG_{\K}$ for some field $\K$ containing a root of 1 of order $p$ (see Proposition~\ref{prop:Galois} below).
On the other hand, it was recently shown that if $\Gamma$ is a graph without the diagonal property, then it can not occur as the maximal pro-$p$ Galois group $\calG_{\K}$ for any field  $\K$ containing a root of 1 of order $p$
(see \cite{cq:bk}*{Thm.~5.6} and \cite{SZ}*{Thm.~1.2}).
Therefore, from the pro-$p$ version of Theorem~\ref{thm:univ} one deduces the following Galois-theoretic result.

\begin{cor}\label{thm:RAAGs ET Galois}
 A pro-$p$ RAAG $\calG_\Gamma$ has universally Koszul $\F_p$-cohomology if, and only if,
there exists a field $\K$ containing a root of 1 of order $p$ such that $\calG_\Gamma\simeq\calG_{\K}$.
\end{cor}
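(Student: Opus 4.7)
The plan is to assemble three ingredients already in place in the paper: the pro-$p$ version of Theorem~\ref{thm:univ}, Proposition~\ref{prop:Galois}, and the realizability obstruction of \cite{cq:bk}*{Thm.~5.6} and \cite{SZ}*{Thm.~1.2}. The passage from the discrete group $G_\Gamma$ to its pro-$p$ completion $\calG_\Gamma$ is taken care of by Lorensen's identification $H^\bullet(\calG_\Gamma,\F_p)\cong H^\bullet(G_\Gamma,\F_p)$; this is what licenses reading the criterion of Theorem~\ref{thm:univ} directly on the cohomology of the pro-$p$ completion.

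For the ``only if'' direction, I would assume that $H^\bullet(\calG_\Gamma,\F_p)$ is universally Koszul. By Lorensen's comparison this equals $H^\bullet(G_\Gamma,\F_p)$, so Theorem~\ref{thm:univ} forces $G_\Gamma$ to be of elementary type, i.e., $\Gamma$ to possess the diagonal property. Proposition~\ref{prop:Galois} then supplies a field $\K$ containing a primitive $p$-th root of unity such that $\calG_\K\simeq\calG_\Gamma$, as required.

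For the ``if'' direction, I would assume such a realization $\calG_\K\simeq\calG_\Gamma$ exists, with $\K$ containing a root of $1$ of order $p$. The obstruction results \cite{cq:bk}*{Thm.~5.6} and \cite{SZ}*{Thm.~1.2} state that whenever $\calG_\Gamma$ occurs as a maximal pro-$p$ Galois group of such a field, the graph $\Gamma$ must already have the diagonal property. Applying Theorem~\ref{thm:univ} (in its pro-$p$ incarnation via Lorensen) then yields that $H^\bullet(\calG_\Gamma,\F_p)\cong H^\bullet(G_\Gamma,\F_p)$ is universally Koszul.

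Since the corollary merely packages results established earlier, no genuine obstacle is expected. The only delicate care-point is being explicit about Lorensen's isomorphism each time Theorem~\ref{thm:univ} — stated for the discrete RAAG — is invoked for the pro-$p$ completion; after that, the biconditional is immediate from combining the three cited inputs.
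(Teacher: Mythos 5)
Your proof is correct and follows essentially the same route as the paper: the paper likewise deduces the corollary by combining the pro-$p$ version of Theorem~\ref{thm:univ} (Theorem~\ref{thm:prop RAAGs}, obtained via Lorensen's isomorphism) with Proposition~\ref{prop:Galois} for realizability when $\Gamma$ has the diagonal property, and with the non-realizability results of \cite{cq:bk}*{Thm.~5.6} and \cite{SZ}*{Thm.~1.2} for the converse. Your extra care in tracking Lorensen's identification at each invocation is sound and matches the paper's implicit use of it.
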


This settles positively Conjecture~\ref{conj:univ} for the class of maximal pro-$p$ Galois groups of fields which are pro-$p$ RAAGs.

\begin{small}
 \begin{acknowledgment} \rm
The authors are grateful to F.W.~Pasini, as this paper was inspired by the talk he delivered at
the ``Mathematical Salad'' seminars at the University of Milano-Bicocca, Italy, in Dec.~2018, and to the organizers of that 
talk.
Also, the authors wish to thank P.~Spiga for the discussions with him about graphs, and
I.~Snopce and M.~Vannacci, as the joint work with the second-named author on pro-$p$ RAAGs
was also source of inspiration for this paper.
\end{acknowledgment}
\end{small}

%%%%%%%%%%%%%%%%%%%%%%%%%%%%%%%%%%%%%%%%%%%%%%%%%%%5
%%%%%%%%%%%55
%%%%%%%%%%%%%%%%%%%%%%%%%%%%%%%%%%%%%%%%%%%%%%

\section{Quadratic algebras and Koszul properties}
A graded algebra over a field $\F$ is a graded associative algebra $A_\bullet$ which decomposes as direct sum of vector spaces  
$\bigoplus_{n\in\Z}A_n$ such that $A_n\cdot A_m\subseteq A_{m+n}$.
Hereinafter every graded algebra $A_\bullet$ is assumed to satisfy the following conditions: $A_n=0$ for 
$n<0$, $A_1=\F$, $\dim(A_n)<\infty$; and $\F$ is always assumed to be a finite field.

For a subset $S$ of $A_\bullet$, $(S)$ denotes the two-sided ideal of $A_\bullet$ generated by $S$.
Moreover, $A_+$ denotes the augmentation ideal $\bigoplus_{n\geq1}A_n$ of $A_\bullet$.
Finally, if $S$ is a subset of a vector space $V$, then $\langle S\rangle$ denotes the vector subspace generated by $S$.

%%%%%%%%%%%%%%%5

\subsection{Quadratic algebras and Koszul algebras}

For a vector space $V$ let $T_\bullet(V)=\bigoplus_{n\geq0}V^{\otimes n}$ denote the tensor algebra generated by $V$.
A graded algebra $A_\bullet$ is called {\sl quadratic} if there exists an isomorphism of graded algebras
$$A_\bullet\simeq T_\bullet(V)/(\Omega)$$ for some vector space $V$ and some subspace $\Omega\leq V\otimes V$.
We write $T_\bullet(V)/(\Omega)= Q(V,\Omega)$.

An ideal $I\trianglelefteq A_\bullet$ inherits the grading from $A_\bullet$, i.e., 
$$I=\bigoplus_{n\geq1}I_n, \qquad I_n=A_n\cap I.$$
In particular, $A_+=\bigoplus_{n\geq1}A_n$ is the augmentation ideal of $A_\bullet$.

\begin{defin}\label{defin:koszul}\rm
A quadratic algebra $A_\bullet$ is said to be Koszul if it admits a resolution
\[
 \xymatrix{\cdots\ar[r] & P(2)_\bullet\ar[r] & P(1)_\bullet\ar[r]  & P(0)_\bullet\ar[r]& \F}
\]
of right $A_\bullet$-modules (with trivial action on $\F$), where for each $i\geq0$, $P(i)_\bullet = \bigoplus_{n\geq0}P(i)_n$
is a free graded
$A_\bullet$-module such that $P(n)_n$ is finitely generated for all $n\geq0$.
\end{defin}

We will not need the formal definition of Koszul algebra for our investigation.
For further properties of Koszul algebras we direct the reader to \cite{pp:quad}*{Ch.~2}
and to \cite{MPQT}*{\S~2}.

\begin{exam}\label{ex:tensor}\rm 
Let $V$ be a finite-dimensional vector space.
The tensor algebra $T_\bullet(V)$, the exterior algebra $\Lambda_\bullet(V)$, and the quadratic algebra $Q(V,V^{\otimes2})$ (called the
 {\sl trivial} quadratic algebra) are Koszul (cf. \cite{lodval}*{Exam.~3.2.5}).
\end{exam}

Given two quadratic algebras $A_\bullet=Q(A_1,\Omega_A)$ and $B_\bullet=Q(B_1,\Omega_B)$, one has 
the following constructions (cf. \cite{MPQT}*{Exam.~2.5}).

 \begin{itemize}
 \item[(a)] The {\sl direct product} of $A_\bullet$ and $B_\bullet$ is the quadratic algebra
 $A_\bullet\sqcap B_\bullet=Q(A_1\oplus B_1,\Omega)$, 
 with $$\Omega=\langle\Omega_A\cup\Omega_B\cup (A_1\otimes B_1)\cup (B_1\otimes A_1)\rangle.$$
\item[(b)] The {\sl wedge product} (or skew-symmetric tensor product) of $A_\bullet$ and $B_\bullet$ is the quadratic algebra
$A_\bullet\wedge B_\bullet=Q(A_1\oplus B_1,\Omega)$,
with $\Omega=\langle\Omega_A\cup\Omega_B\cup\Omega_\wedge\rangle$, where
$$\Omega_\wedge=\langle ab+ba, a\in A_1,b\in B_1\rangle\subseteq A_1\otimes B_1\oplus B_1\otimes A_1.$$
\end{itemize}
If both $A_\bullet$ and $B_\bullet$ are Koszul, then also their direct product and wedge product
are Koszul (cf. \cite{pp:quad}*{\S~3.1}).

%%%%%%%%%%%%%%%%%%%%%%%%%%%%%%%%%%%%%%%%%%%%%%%%%%%%%%%%%%%%%%%%

\subsection{Enhanced Koszul properties}\label{ssec:K}

Let $A_\bullet$ be a graded algebra. 
For two ideals $I,J$ of $A_\bullet$, the {\sl colon ideal} $I:J$ is the 
ideal
\[
 I: J=\{a\in A_\bullet\mid a\cdot J\subseteq I\}.
\]
In particular, if $I=(0)$, then one has
$$(0): J=\mathrm{Ann}(J)=\{a\in A_\bullet\mid a\cdot J=0\}.$$
Note that for every ideals $I,J$ of $A_\bullet$, one has $I\subseteq I:J$ and $\mathrm{Ann}(J)\subseteq I:J$.

Following \cite{MPPT}*{\S~2.2}, we state the definitions of the following three ``enhanced versions'' of the Koszul property:
strong Koszulity, universal Koszulity, and the PBW property.

\begin{defin}\label{defin:SK}\rm (Cf. \cite{MPPT}*{Def.~12}.)
 A quadratic algebra $A_\bullet$ is said to be {\sl strongly Koszul} if $A_1$ has a basis
$\mathcal{X}=\{u_1,\ldots,u_d\}$ such that for every subset $\mathcal{Y}=\{u_{i_1},\ldots,u_{i_k}\}$ of $\mathcal{X}$
and for every $r\in\{1,\ldots,k-1\}$ the colon ideal $(u_{i_1},\ldots,u_{i_{r-1}}):(u_{i_r})$
is generated by a subset of $\mathcal{X}$.
\end{defin}

(See \cites{HHR,cdr} for the original definition of the strong Koszulity property in commutative algebra.)

For a quadratic algebra $A_\bullet$, let
\[
 \calL(A_\bullet)=\{I\trianglelefteq A_\bullet\mid I=A_\bullet\cdot I_1\}
\]
denote the set of all ideals of $A_\bullet$ generated by a subset of $A_1$.
In particular, both the trivial ideal $(0)$ and the augmentation ideal $A_+$ belong to $\calL(A_\bullet)$.

\begin{defin}\label{prop:def UK}\rm (Cf. \cite{MPPT}*{Prop.~17}.)
A quadratic algebra $A_\bullet$ is said to be {\sl universally Koszul} if
for every ideal $I\in\calL(A_\bullet)$ and every $b\in A_1\smallsetminus I_1$ one has $I:(b)\in\calL(A_\bullet)$.
\end{defin}

(See \cites{conca:UK,ctv} for the original definition of the universal Koszulity property in commutative algebra.)

\begin{examples}\rm
\begin{itemize}
 \item[(a)] Set $A_\bullet=\F[a]$, i.e., $A_\bullet$ is the free graded algebra on the generator $a$.
 The augmentation ideal $A_+$ is the ideal $(a)$, and one has $\calL(A_\bullet)=\{(0),(a)\}$.
 Then $(0):(a)=\mathrm{Ann}(a)$, and therefore $(0):(a)=(0)$, which lies in $\calL(A_\bullet)$.
 Hence, $A_\bullet$ is both strongly and universally Koszul.
 \item[(b)] Set $A_\bullet=\F[a]/(a^2)$, i.e., $A_\bullet$ is the algebra generated by $a$ and concentrated in degree 0 and 1.
 One has $A_+=A_1=(a)$, and $\calL=\{(0),(a)\}$.
 Moreover, $(0):(a)=\mathrm{Ann}(a)=A_1\in\calL(A_\bullet)$.
Hence, $A_\bullet$ is both strongly and universally Koszul.
\end{itemize}
\end{examples}

One has the following two properties for universally Koszul algebras.

\begin{prop}\label{prop:directprod UK}
 Let $A_\bullet$ and $B_\bullet$ be quadratic algebras.
 Then the direct product $A_\bullet\sqcap B_\bullet$ is universally Koszul if, and only if, 
 both $A_\bullet$ and $B_\bullet$ are universally Koszul.
\end{prop}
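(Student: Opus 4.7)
The plan is to exploit the structural identity $A_i \cdot B_j = 0$ in $C_\bullet := A_\bullet \sqcap B_\bullet$ for all $i, j \geq 1$, which yields the decomposition $C_n = A_n \oplus B_n$ for $n \geq 1$. Given a subspace $V \subseteq C_1 = A_1 \oplus B_1$, write $V_A$ and $V_B$ for its projections onto $A_1$ and $B_1$. I would first establish the key lemma: the two-sided ideal generated by $V$ in $C_\bullet$ satisfies, for all $n \geq 2$,
\[
(V) \cap A_n = (V_A) \cap A_n \quad \text{and} \quad (V) \cap B_n = (V_B) \cap B_n,
\]
where each ideal on the right is taken in its respective ambient algebra. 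This is because for $v = v^A + v^B \in V$ and $c = c^A + c^B, d = d^A + d^B \in C_{\geq 1}$, the product $c v d$ reduces to $c^A v^A d^A + c^B v^B d^B$, decoupling the $A$- and $B$-components.

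For the implication $(\Leftarrow)$, assume $A_\bullet$ and $B_\bullet$ are universally Koszul. Let $J = (V) \in \calL(C_\bullet)$ and $b = b^A + b^B \in C_1 \setminus V$; set $K^A := (V_A):(b^A)$ in $A_\bullet$ and $K^B := (V_B):(b^B)$ in $B_\bullet$. A direct computation using the key lemma in degrees $1$ and $2$ yields
\[
(J:(b))_n = (K^A \cap A_n) \oplus (K^B \cap B_n) \quad (n \geq 1), \qquad (J:(b))_0 = 0.
\]
When $b^A \notin V_A$, universal Koszulity of $A_\bullet$ gives $K^A \in \calL(A_\bullet)$, so $K^A$ is generated by $K^A_1$; when $b^A \in V_A$, one checks directly that $K^A_1 = A_1$, and the ideal it generates in $A_\bullet$ equals $A_+$, matching $K^A \cap A_{\geq 1}$. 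Combined with the symmetric analysis on the $B$-side, the ideal in $C_\bullet$ generated by $(J:(b))_1 = K^A_1 \oplus K^B_1$ coincides with $J:(b)$, establishing $J:(b) \in \calL(C_\bullet)$.

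For the implication $(\Rightarrow)$, assume $C_\bullet$ is universally Koszul; I would derive universal Koszulity of $A_\bullet$, the case of $B_\bullet$ being symmetric. Given $I = (V) \in \calL(A_\bullet)$ and $b \in A_1 \setminus V$, enlarge to $\tilde V := V \oplus B_1 \subseteq C_1$ and $\tilde I := (\tilde V) \in \calL(C_\bullet)$. Since $\tilde V \cap A_1 = V$, we have $b \notin \tilde V$, and the hypothesis on $C_\bullet$ gives $\tilde I : (b) \in \calL(C_\bullet)$. Applying the key lemma to $\tilde I : (b)$ and intersecting with $A_\bullet$ translates the identity $\tilde I:(b) = ((\tilde I:(b))_1)$ in $C_\bullet$ into the identity $I:(b) = ((I:(b))_1)$ in $A_\bullet$, hence $I:(b) \in \calL(A_\bullet)$.

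The main obstacle will be the boundary case $b^A \in V_A$ (or $b^B \in V_B$) in the $(\Leftarrow)$ direction: the piecewise colon $K^A$ then becomes all of $A_\bullet$, which does not lie in $\calL(A_\bullet)$. This is reconciled by the fact that $(J:(b))_0 = 0$ (because $b \notin V$), so the degree-$0$ summand $\F = K^A \cap A_0$ is absent from $J:(b)$, while in every positive degree the ideal in $A_\bullet$ generated by $K^A_1 = A_1$ is exactly $A_+$, matching $K^A \cap A_{\geq 1}$. The bookkeeping between $V$ and its projections $V_A, V_B$ (which in general only satisfy $V \subseteq V_A \oplus V_B$) is the other technical subtlety.
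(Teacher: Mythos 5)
Your proposal is correct, and it is worth separating the two halves when comparing it with the paper. For the ``only if'' direction (universal Koszulity of $C_\bullet=A_\bullet\sqcap B_\bullet$ implies that of $A_\bullet$ and of $B_\bullet$) your argument is, up to cosmetics, the paper's: the paper argues by contrapositive, extends a witness ideal $I\trianglelefteq A_\bullet$ to $C_\bullet$ (where it remains equal to $I$, since $A_+\cdot B_+=0$), and observes that the colon ideal becomes $J\oplus B_+$ while the ideal generated by its degree-one part is $(J_1)\oplus B_+$, so a failure in $A_\bullet$ propagates to $C_\bullet$; your choice $\tilde V=V\oplus B_1$ instead of extending $I$ itself changes nothing essential. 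The genuine added content of your write-up is the ``if'' direction, which the paper does not prove but delegates to \cite{MPPT}*{Prop.~28}: you give a self-contained argument organized around the decomposition lemma $(V)\cap A_n=(V_A)\cap A_n$ and $(V)\cap B_n=(V_B)\cap B_n$ for $n\geq2$, and the resulting identity $(J:(b))_n=K^A_n\oplus K^B_n$ for $n\geq1$ together with $(J:(b))_0=0$. This is sound, and the decoupling $cxby=c^Ax^Ab^Ay^A+c^Bx^Bb^By^B$ correctly handles the colon with the full ideal $(b)$ rather than just with the element $b$. Two points should be made explicit in a full write-up: first, the vanishing of the scalar component of any element of $J:(b)$ is exactly where the hypothesis $b\notin V$ (rather than the weaker $b^A\notin V_A$ or $b^B\notin V_B$) enters; second, the degenerate case where \emph{both} $b^A\in V_A$ and $b^B\in V_B$ can occur even though $b\notin V$ (e.g.\ $V=\langle a+b'\rangle$ with $a\in A_1$, $b'\in B_1$, and $b=a$), in which case $J:(b)=C_+\in\calL(C_\bullet)$; your ``boundary case'' discussion covers this, but the ``or'' should be read inclusively.
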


\begin{proof}
Let assume that $A_\bullet$ is not universally Koszul.
Then there exists an ideal $I$ of $A_\bullet$, $I\in\calL(A_\bullet)$, and an element $\beta\in A_1$,
such that $J\notin\calL(A_\bullet)$, for $J$ the colon ideal $I:(b)$.

Set $C_\bullet=A_\bullet\sqcap B_\bullet$, and let $\tilde I$ be the extension of $I$ in $C_\bullet$.
Then $\tilde I=I$, as $A_+\cdot B_+=0$.
Let $\tilde J$ denote the colon ideal $\tilde I:(b)\trianglelefteq C_\bullet$.
Then $\tilde J=J\oplus B_+$, and there exists an element $c\in J_+\subseteq\tilde J_+$ such that $c\notin(J_1)$,
and thus $c\notin(\tilde J_1)=(J_1+B_1)$.

The opposite implication is \cite{MPPT}*{Prop.~28}.
\end{proof}

\begin{prop}{\cite{MPPT}*{Prop.~31}}\label{prop:wedgeprod UK}
 Let $A_\bullet$ be a quadratic universally Koszul algebra, and let $V$ be a vector space of finite dimension.
Then the wedge product $A_\bullet\wedge \Lambda_\bullet(V)$ is universally Koszul.
\end{prop}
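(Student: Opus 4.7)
The plan is to reduce to the one-dimensional case by induction on $\dim V$. The associativity isomorphism $A_\bullet \wedge \Lambda_\bullet(V_1 \oplus V_2) \cong (A_\bullet \wedge \Lambda_\bullet(V_1)) \wedge \Lambda_\bullet(V_2)$ is immediate from comparing generators and quadratic relations on both sides (both have generating space $A_1 \oplus V_1 \oplus V_2$ with the same $\Omega_A$, exterior relations on $V_1 \oplus V_2$, and $A_1$--$V_i$ skew-commutations). Since $A_\bullet \wedge \Lambda_\bullet(V_1)$ is universally Koszul by induction hypothesis when $\dim V_1 = 1$, it suffices to settle the base case $\dim V = 1$.

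Fix $V = \langle v\rangle$ and set $C_\bullet = A_\bullet \wedge \Lambda_\bullet(\langle v\rangle)$. First I would establish the PBW-style vector space decomposition $C_n = A_n \oplus v\cdot A_{n-1}$ for all $n\geq 0$, obtained by rewriting every monomial with all occurrences of $v$ pushed to the left (using $va = -av$ for $a \in A_1$ and $v^2 = 0$) and verifying the diamond condition, or equivalently by comparing Hilbert series. This yields an explicit formula for multiplication in $C_\bullet$ and in particular identifies $\mathrm{Ann}_{C_\bullet}(v) = (v)$, which already lies in $\calL(C_\bullet)$.

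Now let $I = (W) \in \calL(C_\bullet)$ with $W \leq A_1 \oplus \langle v\rangle$ and $b \in C_1 \smallsetminus W$. I would split into cases according to whether $W \subseteq A_1$, or $v \in W$, or $W$ contains some mixed element $a_0 + v$ with $a_0 \neq 0$ and $v \notin W$; and analogously for $b = b_A$ or $b = b_A + v$. In each case, writing a generic element $c = \alpha + v\beta$ with $\alpha \in A_n,\ \beta \in A_{n-1}$ and expanding $cb$ in the PBW form, the condition $cb \in I$ translates into explicit membership conditions on $\alpha$ and $\beta$ in ideals of $A_\bullet$ which lie in $\calL(A_\bullet)$. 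Invoking universal Koszulity of $A_\bullet$ on these auxiliary ideals produces subspaces of $A_1$ generating the required $A$-parts, which combine with the relevant element of $\langle v\rangle \oplus A_1$ into a subspace of $C_1$ generating $I:(b)$.

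The hard part is the fully mixed case, in which $W$ contains $a_0 + v$ with $a_0 \neq 0$ (and $v \notin W$) while $b = b_A + v$ is also mixed. Here the skew-commutation $av = -va$ couples the two halves of the PBW decomposition, so the condition $cb \in I$ intertwines the conditions on $\alpha$ and $\beta$ nontrivially. One must then identify the correct auxiliary ideal in $\calL(A_\bullet)$---plausibly the $A_\bullet$-ideal generated by $W \cap A_1$ together with $a_0$---and the correct element of $A_1$ (involving $b_A$) to which universal Koszulity of $A_\bullet$ is to be applied, so that the resulting degree-1 generators in $A_1$ lift, together with $a_0 + v$, to a subspace of $C_1$ generating $I:(b)$. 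Checking that this lift actually exhausts $I:(b)$ is the principal calculation of the argument.
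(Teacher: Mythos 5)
A preliminary remark: the paper does not prove this proposition at all --- it is imported verbatim from \cite{MPPT}*{Prop.~31} and used as a black box --- so there is no in-paper argument to compare yours with; your proposal has to stand on its own.

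On its own terms it has a genuine gap. The reduction to $\dim V=1$ via associativity of the wedge product is correct, and so is the decomposition $C_n=A_n\oplus v\cdot A_{n-1}$ with multiplication $(\alpha+v\beta)(\alpha'+v\beta')=\alpha\alpha'+v\bigl(\beta\alpha'+(-1)^{|\alpha|}\alpha\beta'\bigr)$. But everything after that is a plan rather than a proof, and the one substantive assertion you make about the core step is unjustified. You claim that for $c=\alpha+v\beta$ the condition $cb\in I$ ``translates into explicit membership conditions on $\alpha$ and $\beta$ in ideals of $A_\bullet$ which lie in $\calL(A_\bullet)$.'' Already in a case you treat as routine ($W=I_1\subseteq A_1$ and $b=b_A+v$ mixed) this fails as stated: here $I=I_A\oplus vI_A$ with $I_A$ the ideal of $A_\bullet$ generated by $W$, and $cb=\alpha b_A+v\bigl((-1)^{|\alpha|}\alpha+\beta b_A\bigr)$, so the two membership conditions are coupled; eliminating $\alpha$ leaves on $\beta$ the condition $\beta\, b_A^2\in I_A$, a colon by the \emph{degree-two} element $b_A^2$, which universal Koszulity of $A_\bullet$ does not control. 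Some additional input is needed --- for instance $b^2=b_A^2\in I$, which holds when $A_\bullet$ is graded-commutative with vanishing squares in degree one, whence $b\in I:(b)$ and one can aim to show $I:(b)=I+(b)$ --- and nothing in your argument supplies it. The fully mixed case, which you correctly identify as the crux, is not addressed at all: ``plausibly'' naming an auxiliary ideal and deferring the verification that the proposed degree-one generators exhaust $I:(b)$ omits exactly the content of the proposition.
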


Finally, one has also the notion of {\sl PBW generators} of a quadratic algebra,
--- introduced in \cite{pp:quad}*{Ch.~4} --- which generalizes the notion of G-quadratic commutative algebra (cf. \cite{conca0}).
A quadratic algebra $A_\bullet=Q(V,\Omega)$ is called a {\sl PBW quadratic algebra}
if it admits generators for which the non-commutative Gr\"obner basis of relations consists
of elements of degree two (see \cite{lodval}*{\S~4.3} and \cite{MPPT}*{Def.~8}).
Namely, consider the lexicographical order $\prec$ on the set of multi-indices of length $n$ --- i.e., 
$(i_1,\ldots,i_n)\prec(j_1,\ldots,j_n)$ if, and only if, there exists $1\leq k\leq n$ such that 
$i_1=j_1, i_2=j_2,\ldots, i_{k-1}=j_{k-1}$ and $i_k<j_k$ for $i_h,j_h\in \{1,\ldots,d\}$, where $d=\dim(V)$.

Let $\{v_1,\ldots,v_d\}$ be a basis of $V$.
Then there exists $\calS\subseteq \{1,\ldots,d\}^2$ such that the relations in $\Omega$ can be written in the form
\[
 v_{i_1}v_{i_2}=\sum_{\substack{(j_1,j_2)\prec(i_1,i_2)\\(j_1,j_2)\in\calS}}\alpha v_{j_1}v_{j_2}, \qquad (i_1,i_2)\notin\calS,\alpha\in\F
\]
(cf. \cite{pp:quad}*{Lemma~4.1.1}).

\begin{defin}\label{defin:PBW}\rm
Given $A_\bullet$ and $\calS$ as above, set $\calS^{(0)}=\{\varnothing\}$, $\calS^{(1)}=\{1,\ldots,d\}$, and
\[
 \calS^{(n)}=\{(i_1,\ldots,i_n)\mid (i_{h},i_{h+1})\in\calS, h=1,\ldots,n-1\} \qquad\text{for }n\geq2.
\]
The elements $v_1,\ldots,v_d$ of a basis of $V$ are called {\sl PBW generators} of $A_\bullet$ if the set of monomials
$\{v_{i_1}\cdots v_{i_n}\mid(i_1,\ldots,i_n)\in\calS^{(n)}\}$ is a basis of $A_n$ for every $n\geq0$.
Such a quadratic algebra is called a PBW algebra.
\end{defin}

These ``enhanced Koszulity'' properties are independent to each other, namely, none implies any other.
On the other hand, if a quadratic algebra has one of these properties, then it is Koszul.
Altogether, one has the following picture (cf. \cite{MPPT}*{\S~1.2}):
\[
 \xymatrix{ \text{PBW}\ar@{=>}[dr] & \text{Strong K.}\ar@{=>}[d] & \text{Universal K.}\ar@{=>}[dl] \\ & \text{Koszulity.} & } 
\]

%%%%%%%%%%%%%%%%%%%%%%%%%%%%%%%%%%%%%%%%%%%%%%%%%%%5
%%%%%%%%%%%55
%%%%%%%%%%%%%%%%%%%%%%%%%%%%%%%%%%%%%%%%%%%%%%

\section{Right-angled Artin groups}

\subsection{Graphs}
For the notion of graph we refer to \cite{graph:book}*{Ch.~1}.
A na\"ive graph is a pair $\Gamma=(\sV,\sE)$ of sets where $\sE\subseteq[\sV]^2$, i.e., the elements
of $\sE$ are unordered subsets of 2 elements of $\sV$, which we shall denote by $(v,w)=(w,v)$, with $v,w\in\sV$.
The elements of $\sV$ are the vertices of the graph $\Gamma$, the elements of $\sE$ are its edges.
Moreover, we assume all graphs to have no loops, i.e., $(v,v)\notin\sE$ for any $v\in\sV$.
A graph $\Gamma=(\sV,\sE)$ is said to be finite if it has finite vertices.

Henceforth every graph will be assumed to be na\"ive and finite.
Here we list some definitions regarding graphs which will be used hereinafter.

\begin{defin}\label{defin:graph}\rm
Let $\Gamma=(\sV,\sE)$ be a graph.
\begin{itemize}
 \item[(i)] $\Gamma$ is a {\sl complete graph} if $\sE=[\sV]^2$.
 \item[(ii)]  A {\sl star graph} is a graph $\Gamma=(\sV,\sE)$ such that $\sV=\{w,v_1,\ldots,v_d\}$, with $d\geq2$,
 and $\sE=\{(w,v_1),\ldots,(w,v_d)\}$.
 \item[(iii)] A {\sl full subgraph} (or {\sl induced subgraph}) of $\Gamma$ is a subgraph $\Gamma'=(\sV',\sE')$
 of $\Gamma$ such that $\sE'=\sE\cap[\sV']^2$, i.e., if two vertices of $\sV'$ are joined by an edge of
$\Gamma$, then they are joined by an edge also in $\Gamma'$.
 \item[(iv)] For $n\geq1$, a {\sl $n$-clique} $\Gamma'$ of $\Gamma$ is a full subgraph $\Gamma'$ of $\Gamma$
with $n$ vertices which is a complete graph.
 \item[(v)] For $v,w\in\sV$, a {\sl path} from $v$ to $w$ is a subgraph $P=(\sV',\sE')$ of $\Gamma$ with 
 $\sV'=\{v_0=v,v_1,\ldots,v_{n-1},v_n=w\}$ and $\sE'=\{(v_0,v_1),(v_1,v_2),\ldots,(v_{n-1},v_n)\}$,
 and $n$ is the length of $P$. A path is a {\sl cycle} if $v=w$.
% \item[(vi)] The distance $d_\Gamma(v,w)$ in $\Gamma$ of two vertices $v,w\in\sV$ is the length of a shortest path from $v$ to $w$;
% if no such path exists, the distance is infinite.
\end{itemize}
\end{defin}

%%%%%%%%%%%%%%%%%%%%%%%%%%%%%%%%%%%%%%%%%%%%%%55

\subsection{RAAGs and cohomology}\label{ssec:RAAGs}

Let $\Gamma=(\sV,\sE)$ be a graph, with vertices $\sV=\{v_1,\ldots,v_d\}$.
The {\sl right-angled Artin group} associated to $\Gamma$ is the group $G_\Gamma$ with presentation
\[
 G_\Gamma=\langle v_1,\ldots,v_d\mid [v_i,v_j]=1\text{ for }(v_i,v_j)\in\sE\rangle.
\]

The following is a well known result on RAAGs.

\begin{lem}\label{lem:RAAGs freeprod}
 Let $\Gamma$ be a graph, and suppose $\Gamma$ decomposes in connected components $\Gamma_1,\ldots,\Gamma_r$.
Then the RAAG $G_\Gamma$ decomposes as free product $G_{\Gamma_1}\ast\cdots\ast G_{\Gamma_r}$.
\end{lem}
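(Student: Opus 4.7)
The plan is to prove the statement by comparing presentations. Let $\mathcal{V}_i$ and $\mathcal{E}_i$ denote the vertex set and edge set of the connected component $\Gamma_i$, for $i=1,\ldots,r$. Since $\Gamma_1,\ldots,\Gamma_r$ are the connected components of $\Gamma$, the vertex set decomposes as a disjoint union $\mathcal{V}=\mathcal{V}_1\sqcup\cdots\sqcup\mathcal{V}_r$. The first routine verification is that $\mathcal{E}=\mathcal{E}_1\sqcup\cdots\sqcup\mathcal{E}_r$, i.e., every edge of $\Gamma$ lies entirely within one component: indeed, if $(v,w)\in\mathcal{E}$, then $v$ and $w$ are connected by the path of length $1$ given by this edge, so they lie in the same connected component.

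Next, I would write down the canonical presentation of $G_\Gamma$ and rearrange it using the above partition:
\[
G_\Gamma = \left\langle \bigsqcup_{i=1}^{r}\mathcal{V}_i \;\middle|\; [v,w]=1\text{ for }(v,w)\in \bigsqcup_{i=1}^{r}\mathcal{E}_i \right\rangle.
\]
Because each relator $[v,w]=1$ involves only generators belonging to a single $\mathcal{V}_i$, this presentation splits as a concatenation of generators and relators of the subgroups $G_{\Gamma_i}=\langle \mathcal{V}_i \mid [v,w]=1\text{ for }(v,w)\in\mathcal{E}_i\rangle$, with no relations linking distinct blocks. By the standard description of a free product in terms of presentations (i.e., the universal property: a presentation of a free product is obtained by taking the disjoint union of presentations of the factors), this is exactly the presentation of $G_{\Gamma_1}\ast\cdots\ast G_{\Gamma_r}$, which yields the desired isomorphism.

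There is no real obstacle here; the whole content is the observation that connected components contain all their incident edges, so that the edge set partitions along with the vertex set. Alternatively, one could argue by induction on $r$, dealing with the case $r=2$ via the universal property of the free product: use the inclusions $G_{\Gamma_i}\hookrightarrow G_\Gamma$ (coming from the inclusions of presentations, with a retraction killing the other generators) to obtain a map $G_{\Gamma_1}\ast G_{\Gamma_2}\to G_\Gamma$, and construct its inverse from the presentation of $G_\Gamma$, again using that no defining relator mixes generators from the two components.
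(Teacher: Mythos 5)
Your argument is correct: the key observation that every edge of $\Gamma$ lies within a single connected component, so that the defining presentation of $G_\Gamma$ splits as the disjoint union of the presentations of the $G_{\Gamma_i}$, is exactly the standard proof of this fact. The paper itself states the lemma without proof, citing it as well known, so there is nothing to compare against; your presentation-based argument (or the equivalent induction via the universal property of the free product) is the expected justification.
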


%For a graph $\Gamma=(\sV,\sE)$, let $\Gamma^{\op}$ denote its {\sl opposite graph}, i.e., 
%$\Gamma'$ is a graph with $\sV(\Gamma^{\op})=\sV$ and $\sE(\Gamma^{\op})=[\sV]^2\smallsetminus\sE$ ---
%in other words, two vertices are joined by an edge in $\Gamma'$ if, and only if, they are not in $\Gamma$.

\begin{defin}\label{defin:lambda gamma}\rm
Let $\Gamma=(\sV,\sE)$ be a graph, with $\sV=\{v_1,\ldots,v_d\}$, and let $V$ be the $\F$-vector space generated by $\sV^{\op}=\{a_1,\ldots,a_d\}$.
The {\sl exterior Stanley-Reisner algebra} $\Lambda_\bullet(\Gamma^{\op})$ over $\F$ associated to $\Gamma$
is the quotient of the exterior algebra $\Lambda_\bullet(V)$ over the two-sided ideal generated by 
\[
 \Omega=\langle a_i\wedge a_j \text{ for } (v_i,v_j)\notin \sE,1\leq i,j\leq d \rangle\subseteq \Lambda_2(V).
\]
\end{defin}

Since $\Lambda_\bullet(V)$ is quadratic and $\Omega\subseteq \Lambda_2(V)$, the algebra $\Lambda_\bullet(\Gamma^{\op})$ is quadratic.
While working with the algebra $\Lambda_\bullet(\Gamma^{\op})$ we will omit the wedge product $\wedge$
to denote the product of two elements, and we will just write $ab$ for the product of two elements $a$ and $b$ of $\Lambda_\bullet(\Gamma^{\op})$ --- in particular, $a_ia_j$ will denote the image of $a_i\wedge a_j$ in $\Lambda_2(\Gamma^{\op})$.
The result of R.~Fr\"oberg \cite{froberg} implies that $\Lambda_\bullet(\Gamma^{\op})$ is Koszul for any graph $\Gamma$.

The following result describes the $\F$-cohomology algebra of a RAAG (see \cite{papa}*{\S~3.2} and \cite{weigel:koszul}*{\S~4.2.2}).

\begin{prop}\label{prop:RAAA cohomology}
Let $G_\Gamma$ be the RAAG associated to a graph $\Gamma=(\sV,\sE)$.
Then the $\F$-cohomology algebra $H^\bullet(G_\Gamma,\F)$ of $G_\Gamma$
is isomorphic to the algebra $\Lambda_\bullet(\Gamma^{\op})$.
In particular, $\Lambda_\bullet(\Gamma^{\op})$ is Koszul.
\end{prop}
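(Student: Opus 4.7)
The plan is to realize $G_\Gamma$ geometrically as the fundamental group of a compact non-positively curved cube complex --- the \emph{Salvetti complex} $S_\Gamma$ --- and to compute its cohomology cellularly. The complex $S_\Gamma$ has one $0$-cell, one oriented loop for each vertex $v_i\in\sV$, and, for every $k$-clique $\{v_{i_1},\ldots,v_{i_k}\}$ of $\Gamma$, a $k$-dimensional torus glued along the subcomplex determined by the loops of $v_{i_1},\ldots,v_{i_k}$. By a theorem of Charney--Davis, the universal cover of $S_\Gamma$ is CAT($0$), hence contractible, so $S_\Gamma$ is a $K(G_\Gamma,1)$ and $H^\bullet(G_\Gamma,\F)\isom H^\bullet(S_\Gamma,\F)$.

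I would then compute the cellular cohomology of $S_\Gamma$. In each degree $n$ the cellular cochain group has a basis indexed by the ordered $n$-cliques of $\Gamma$, and --- exactly as for the standard $n$-torus --- the cellular coboundaries vanish identically, because each boundary face of a cube of $S_\Gamma$ appears with cancelling orientations in the two parallel copies that bound it. In particular, $\dim_\F H^n(G_\Gamma,\F)$ equals the number of $n$-cliques of $\Gamma$, matching $\dim_\F\Lambda_n(\Gamma^{\op})$.

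Next, for $i=1,\ldots,d$ let $a_i\in H^1(G_\Gamma,\F)$ denote the class of the cellular $1$-cochain dual to the loop of $v_i$. The cellular cup product formula on $S_\Gamma$ sends a monomial $a_{i_1}\cdots a_{i_n}$ (with $i_1<\cdots<i_n$, up to sign) to the cochain dual to the $n$-torus labeled by $\{v_{i_1},\ldots,v_{i_n}\}$ when these vertices span an $n$-clique, and to $0$ otherwise; in particular $a_i^2=0$ for every $i$ and $a_ia_j=0$ whenever $(v_i,v_j)\notin\sE$. Therefore the assignment $a_i\mapsto a_i$ extends to a well-defined graded algebra homomorphism $\phi\colon\Lambda_\bullet(\Gamma^{\op})\longrightarrow H^\bullet(G_\Gamma,\F)$; since $\phi$ is a bijection in degree $1$ and both graded algebras are generated in degree $1$ with matching finite dimensions in every degree, $\phi$ is a graded isomorphism. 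The Koszulness of $\Lambda_\bullet(\Gamma^{\op})$ is then an immediate application of Fr\"oberg's theorem \cite{froberg}. The main obstacle will be verifying the cellular cup product formula on $S_\Gamma$ rigorously --- that is, tracking how cubical faces in the Salvetti complex correspond to subcliques of $\Gamma$ and checking that ``missing'' cliques really do force $a_ia_j=0$ rather than some spurious coboundary relation; once this combinatorial bookkeeping is in place, everything else follows by inspection.
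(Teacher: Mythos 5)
Your argument is correct, but note that the paper itself gives no proof of this proposition: it is quoted as a known result with references to \cite{papa}*{\S~3.2} and \cite{weigel:koszul}*{\S~4.2.2}, and your Salvetti-complex computation is essentially the proof found in those sources (see also \cite{raags}). Two small points would tighten the one step you flag as the ``main obstacle.'' First, the cells of $S_\Gamma$ are indexed by \emph{unordered} cliques (one $n$-cube per $n$-clique, with a choice of ordering only fixing an orientation). Second, rather than verifying the cellular cup product on $S_\Gamma$ directly, it is cleaner to use the cellular inclusion $\iota\colon S_\Gamma\hookrightarrow (S^1)^d$ into the $d$-torus with its product CW structure: since all cellular differentials vanish on both sides and cochain restriction is surjective, $\iota^*\colon \Lambda_\bullet(\F^d)\simeq H^\bullet((S^1)^d,\F)\to H^\bullet(S_\Gamma,\F)$ is a surjective algebra map whose kernel is spanned by the duals of the missing cells, i.e.\ by the monomials $a_{i_1}\cdots a_{i_n}$ whose vertex set is not a clique; because the clique complex of a graph is flag, every such monomial is divisible by some $a_ia_j$ with $(v_i,v_j)\notin\sE$, so the kernel is exactly the ideal $(\Omega)$ and $H^\bullet(G_\Gamma,\F)\simeq\Lambda_\bullet(\Gamma^{\op})$. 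This also settles $a_i^2=0$ in characteristic $2$ (where graded-commutativity alone does not give it), since $a_i^2$ is pulled back from $H^2(S^1,\F)=0$. The appeal to Fr\"oberg \cite{froberg} for Koszulity matches what the paper does.
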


Thus, if a graph $\Gamma$ decomposes into connected components $\Gamma_1,\ldots,\Gamma_r$,
then the $\F$-cohomology algebra of the RAAG $G_\Gamma$ decomposes as direct product of quadratic algebras
\begin{equation}\label{eq:cohom decomp}
 H^\bullet(G_\Gamma,\F)\simeq \Lambda_\bullet(\Gamma^{\op})\simeq
 \Lambda_\bullet(\Gamma_1^{\op})\sqcap\ldots\sqcap\Lambda_\bullet(\Gamma_r^{\op}).
\end{equation}

%%%%%%%%%%%%%%%%%%%%%%%%%%%%%%%%%%%%%%%%%%%%%%%%%%%%%%%%%5

\subsection{RAAGs of elementary type}

By Lemma~\ref{lem:RAAGs freeprod}, the free product of two RAAGs $G_{\Gamma_1}\ast G_{\Gamma_2}$ is the RAAG
with the disjoint union of $\Gamma_1$ and $\Gamma_2$ as associated graph.
Analogously, the direct product $G_\Gamma\times \Z$ of a RAAG $G_\Gamma$ with $\Z$ is isomorphic to the RAAG $G_{\tilde\Gamma}$
where $\tilde\Gamma $ is the cone graph with basis $\Gamma$, i.e., 
$$\sV(\tilde\Gamma)=\sV(\Gamma\dot\cup\{w\})\quad\text{and}\quad\sE(\tilde\Gamma)=\sE(\Gamma)\dot\cup\{(w,v),v\in\sV(\Gamma)\}.$$

For the following definition we mimic the definition of {\sl elementary type pro-$p$ groups},
defined by I.~Efrat (cf. \cite{Ido:ET}*{\S~3})

\begin{defin}\label{defin:ETRAAGs}\rm
 The class of {\sl RAAGs of elementary type} is the minimal class $\mathcal{C}$ of finitely generated RAAGs such that
 \begin{itemize}
\item[(a)] $\Z$ (considered as RAAGs with associated graph a single vertex) belongs to $\mathcal{C}$;
\item[(b)] if $G_{\Gamma_1}$ and $G_{\Gamma_2}$ belong to $\mathcal{C}$, then also 
$G_{\Gamma_1}\ast G_{\Gamma_2}$ belongs to $\mathcal{C}$;
\item[(c)] if $G_\Gamma$ belongs to $\mathcal{C}$, then also $G_\Gamma\times\Z$ belongs to $\mathcal{C}$.
\end{itemize}
\end{defin}

In other words, RAAGs of elementary type are precisely the RAAGs whose associated graphs are constructible starting
from the graph with a single vertex, via the following operations: disjoint union of graphs;
and cones.

Let $C_4$ and $P_4$ denote the cycle of length 4 and the path (non-cycle) of length 3 respectively, namely,
 \begin{equation}\label{eq:C P}
  \begin{minipage}{0.1\textwidth}
    \begin{center}$C_4$ = \end{center}
   \end{minipage}
   \begin{minipage}{0.3\textwidth}
    \xymatrix{  v_1\ar@{-}[d]\ar@{-}[r] & v_4\ar@{-}[d] \\ v_2\ar@{-}[r] & v_3 }
   \end{minipage}
   \begin{minipage}{0.1\textwidth}
    \begin{center} and \end{center}
   \end{minipage}
   \begin{minipage}{0.1\textwidth}
    \begin{center}$P_4$ = \end{center}
   \end{minipage}
  \begin{minipage}{0.3\textwidth}
    \xymatrix{v_1\ar@{-}[d] & v_4\ar@{-}[d] \\ v_2\ar@{-}[r] & v_3 }
   \end{minipage}
\end{equation}
A graph $\Gamma$ is said to have the {\sl diagonal property} if it does not contain a full subgraph isomorphic
to $C_4$ or $P_4$.
By the work of E.S.~Wolk \cite{wolk}, one has the following characterization of RAAGs of elementary type.

\begin{prop}\label{thm:ETRAAGs}
 Let $G_\Gamma$ be a RAAG with associated graph $\Gamma$.
 Then $G_\Gamma$ is of elementary type if, and only if, $\Gamma$ has the diagonal property.
\end{prop}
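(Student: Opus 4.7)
The plan is to prove the two implications separately, using the combinatorial translation of the elementary-type construction into graph operations: disjoint union corresponds to free product (by Lemma~\ref{lem:RAAGs freeprod}), and coning (i.e., adding a vertex adjacent to all existing vertices) corresponds to the direct product with $\Z$, as noted right after Lemma~\ref{lem:RAAGs freeprod}. So the question becomes purely one of characterizing the graphs producible by iterated disjoint unions and coning.

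For the \emph{only if} direction, I would induct on the construction witnessing that $G_\Gamma$ lies in the class $\mathcal{C}$. The base case is a single vertex, which trivially has the diagonal property. For the inductive step there are two operations to check. If $\Gamma=\Gamma_1\dot\cup\Gamma_2$ is a disjoint union of two graphs with the diagonal property, then any full subgraph of $\Gamma$ which is connected lies entirely in $\Gamma_1$ or $\Gamma_2$; since $C_4$ and $P_4$ are connected, no induced copy of them can appear in $\Gamma$. If $\tilde\Gamma$ is the cone over $\Gamma$ with new apex vertex $w$, and $\Gamma$ has the diagonal property, then I observe that neither $C_4$ nor $P_4$ contains a vertex adjacent to all the others (in $C_4$ every vertex has degree $2$ and in $P_4$ the endpoints have degree $1$). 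Hence any hypothetical induced copy of $C_4$ or $P_4$ in $\tilde\Gamma$ cannot contain $w$, so it must lie in $\Gamma$, contradicting the inductive hypothesis.

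For the \emph{if} direction I invoke Wolk's theorem from \cite{wolk}, which asserts precisely that a finite graph contains no induced $P_4$ and no induced $C_4$ if and only if it can be obtained from the one-vertex graph by iterating disjoint unions and the coning operation (that is, it is a trivially perfect graph in modern terminology). Given such a constructive decomposition of $\Gamma$, the translation between graph operations and group operations recalled above produces, by an obvious induction matching Wolk's recursion with Definition~\ref{defin:ETRAAGs}, a decomposition of $G_\Gamma$ as iterated free products and direct products with $\Z$ starting from $\Z$, i.e., a witness that $G_\Gamma$ belongs to $\mathcal{C}$.

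The conceptual heart of the statement is Wolk's structure theorem; once it is in hand both directions are routine inductions. The only step where I need to be slightly careful is the verification in the coning case of the \emph{only if} direction, namely that no induced $C_4$ or $P_4$ uses the apex, but this is immediate from the degree count above. So I do not expect a genuine obstacle: the proposition is essentially a dictionary between Wolk's combinatorial recursion and the group-theoretic Definition~\ref{defin:ETRAAGs}.
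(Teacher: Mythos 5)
Your proposal is correct and takes essentially the same route as the paper, which states this proposition without any proof at all, attributing it entirely to Wolk's characterization \cite{wolk} of the graphs with the diagonal property as exactly those obtainable from single vertices by disjoint unions and cones --- precisely the dictionary you set up via Lemma~\ref{lem:RAAGs freeprod} and the cone/direct-product correspondence. The only material you add is the explicit degree-count induction for the ``only if'' direction, which is correct (and implicitly, like the paper, you identify a RAAG with its defining graph rather than invoking Droms' rigidity theorem for graph groups).
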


Moreover, by \cite{Droms}, every subgroup of a RAAG $G_\Gamma$ is again a RAAG if, and only if, 
$\Gamma$ has the diagonal property. 
In particular, every subgroup of a RAAG of elementary type is again a RAAG of elementary type.

\begin{exam}\label{exam:ETRAAGs}\rm
\begin{itemize}
 \item[(a)] All graphs with at most 3 vertices have the diagonal property, and thus yield a RAAG of elementary type.
 \item[(b)] A star graph $\Gamma$ is the cone graph with basis a disjoint union of vertices, and thus
 $G_\Gamma\simeq (\Z\ast\cdots\ast\Z)\times\Z$ is a RAAG of elementary type.
 \item[(c)] A complete graph $\Gamma$ may be obtained as iterated cone starting from a single vertex, indeed
 $G_\Gamma\simeq\Z\times\ldots\times\Z$ is a RAAG of elementary type.
\end{itemize}
\end{exam}

%%%%%%%%%%%%%%%%%%%%%%%%%%
%%%%%%%%%%%%%55
%%%%%%%%%%%%%%%%%%%%%%%%%%%%%%%%%%%%%%%%%%%%%%%%%%%%%%%%%%%%%%%%%%%%%%%%%%%%%%

\section{RAAGs and enhanced Koszul properties}\label{ssec:RAAGs K}

Let $\Gamma=(\sV,\sE)$ be a graph, with $\sV=\{v_1,\ldots,v_d\}$, and set $\sV^{\op}=\{a_1,\ldots,a_d\}$.
Since the product in $\Lambda_\bullet(\Gamma^{\op})$ is graded-commutative, every ideal in $\Lambda_\bullet(\Gamma^{\op})$
is two-sided.

Given indices $1\leq i_1<\ldots<i_n\leq d$, one has $a_{i_1}\cdots a_{i_n}\neq0$ if, and only if,
there is a $n$-clique $\Gamma'$ of $\Gamma$ such that $\sV(\Gamma')=\{v_{i_1},\ldots,v_{i_n}\}$.
In particular, $\Lambda_n(\Gamma^{\op})=0$ if there are no $n$-cliques in $\Gamma$ --- which is always the case
if $n>d$.

Thus, for every $n\geq1$ the set 
\[\mathcal{B}_n=\{a_{i_1}\cdots a_{i_n}\mid 1\leq i_1<\ldots<i_n\leq d\text{ and }a_{i_1}\cdots a_{i_n}\neq0\}\]
is in 1-to-1 correspondence with the set of all $n$-cliques of $\Gamma$, and it is a basis of $\Lambda_n(\Gamma^{\op})$.
Moreover, for $\Delta=a_{i_1}\cdots a_{i_n}\in\calB_n$, we define $\sV(\Delta)=\{a_{i_1},\ldots,a_{i_n}\}\subseteq\sV^{\op}$
--- namely, $\sV(\Delta)$ corresponds to the vertices of the $n$-clique of $\Gamma$ associated to $\Delta$.

\begin{lem}\label{lem:cliques}
For a graph $\Gamma$, let $\calS$ be a subset of $\sV^{\op}$ and let $I\trianglelefteq\Lambda_\bullet(\Gamma^{\op})$
be the ideal generated by $\calS$.
Then $I_n$ is the subspace of $\Lambda_n(\Gamma^{\op})$ generated by
$$\calB(\calS)_n=\{\Delta\in\calB_n\mid\sV(\Delta)\cap\calS\neq\varnothing\}.$$
\end{lem}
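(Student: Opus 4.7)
The plan is to prove both inclusions between $I_n$ and $\langle\calB(\calS)_n\rangle$ directly, exploiting the monomial-like structure of $\Lambda_\bullet(\Gamma^{\op})$. Since $\Lambda_\bullet(\Gamma^{\op})$ is graded-commutative and $\calS\subseteq\sV^{\op}\subseteq\Lambda_1(\Gamma^{\op})$, every ideal is two-sided and $I_n=\sum_{a\in\calS} a\cdot\Lambda_{n-1}(\Gamma^{\op})$. Thus, by $\F$-linearity, it suffices to study products of the form $a\cdot\Delta'$ with $a\in\calS$ and $\Delta'$ ranging over the monomial basis $\calB_{n-1}$.

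For the inclusion $\langle\calB(\calS)_n\rangle\subseteq I_n$, let $\Delta=a_{i_1}\cdots a_{i_n}\in\calB(\calS)_n$ and pick some $a_{i_k}\in\sV(\Delta)\cap\calS$. Using graded-commutativity one may reorder the factors to obtain $\Delta=\pm\, a_{i_k}\cdot\Delta'$, where $\Delta'$ is the product of the remaining $n-1$ vertices of $\sV(\Delta)$ taken in increasing order. Since any subset of an $n$-clique of $\Gamma$ is again a clique, $\Delta'\in\calB_{n-1}$; in particular it is nonzero, and therefore $\Delta\in I_n$.

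For the reverse inclusion, fix $a\in\calS$ and $\Delta'=a_{j_1}\cdots a_{j_{n-1}}\in\calB_{n-1}$. If $a=a_{j_l}$ for some $l$, or if the vertex corresponding to $a$ fails to be adjacent in $\Gamma$ to some $v_{j_l}$, then $a\cdot\Delta'=0$ in $\Lambda_\bullet(\Gamma^{\op})$. Otherwise $\{a\}\cup\sV(\Delta')$ is the vertex set of an $n$-clique, and reordering yields $a\cdot\Delta'=\pm\,\Delta$ for the corresponding $\Delta\in\calB_n$. Since $a\in\sV(\Delta)\cap\calS$, this $\Delta$ lies in $\calB(\calS)_n$. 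Therefore $I_n\subseteq\langle\calB(\calS)_n\rangle$, completing the proof.

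The argument is essentially routine; the only thing to keep track of is the fact that products of the distinguished generators $a_i$ either vanish or equal a basis monomial up to sign, which is precisely the content of the definition of $\calB_n$ together with graded-commutativity. The signs coming from reordering are immaterial since we are only asserting membership in an $\F$-subspace.
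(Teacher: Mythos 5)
Your proof is correct and rests on the same basic observation as the paper's: a product of basis monomials in $\Lambda_\bullet(\Gamma^{\op})$ is either zero or, up to sign, again a basis monomial whose vertex set is the union of the factors' vertex sets. The paper packages this as an induction on $n$ via the decomposition $I_n=I_1\cdot\Lambda_{n-1}(\Gamma^{\op})+\ldots+I_{n-1}\cdot\Lambda_1(\Gamma^{\op})$, whereas you argue both inclusions directly from $I_n=\langle\calS\rangle\cdot\Lambda_{n-1}(\Gamma^{\op})$; this is a cosmetic difference, and your version is if anything slightly more streamlined.
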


\begin{proof}
We proceed by induction on $n$.
If $n=1$ then $\calB(\calS)_n=\calS$. 
If $n\geq2$, then by induction one has
\[\begin{split} 
    I_n &= I_1\cdot \Lambda_{n-1}(\Gamma^{\op})+\ldots+I_{n-1}\cdot \Lambda_1(\Gamma^{\op}) \\
    &=\langle\calS\rangle\cdot\langle\calB_{n-1}\rangle+\ldots+\langle\calB(\calS)_{n-1}\rangle\cdot\langle\calB_1\rangle
  \end{split}\]
and thus $I_n=\langle\calB(\calS)_n\rangle$.
\end{proof}

From the above description of $\Lambda_\bullet(\Gamma^{\op})$ one deduces easily the following.

\begin{cor}\label{thm_PBW}
Let $\Gamma=(\sV,\sE)$ be a graph. 
The exterior Stanley-Reisner algebra $A_\bullet=\Lambda_\bullet(\Gamma^{\mathrm{op}})$ is a PBW algebra.
\end{cor}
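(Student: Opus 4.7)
The plan is to verify Definition~\ref{defin:PBW} directly, leveraging the clique description of $\Lambda_n(\Gamma^{\op})$. First I would fix an arbitrary linear order $v_1<v_2<\cdots<v_d$ on the vertices and take $\{a_1,\ldots,a_d\}$ as the corresponding ordered basis of $V=A_1$. Inspecting the defining relations of $A_\bullet=\Lambda_\bullet(\Gamma^{\op})$, namely $a_i^2=0$, $a_ja_i=-a_ia_j$ for edges $(v_i,v_j)$ with $i<j$, and $a_ia_j=a_ja_i=0$ for non-edges, one sees that each relation rewrites the lex-larger pair of generators as a (possibly zero) $\F$-multiple of a strictly smaller pair. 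This identifies the set $\calS$ of Definition~\ref{defin:PBW} as
\[
\calS=\{(i,j)\in\{1,\ldots,d\}^2 \colon i<j \text{ and } (v_i,v_j)\in\sE\},
\]
whence $\calS^{(n)}$ consists of the strictly increasing tuples $(i_1,\ldots,i_n)$ with $(v_{i_h},v_{i_{h+1}})\in\sE$ for each $h=1,\ldots,n-1$.

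Next I would compare $\calS^{(n)}$ with the basis $\calB_n$ of $\Lambda_n(\Gamma^{\op})$ introduced before Lemma~\ref{lem:cliques}, which is indexed by the $n$-cliques of $\Gamma$. Every $n$-clique lies in $\calS^{(n)}$, since in a clique all --- and a fortiori the consecutive --- pairs are edges. Conversely, if $(i_1,\ldots,i_n)\in\calS^{(n)}$ and $\{v_{i_1},\ldots,v_{i_n}\}$ is not a clique, then some non-consecutive pair $(v_{i_p},v_{i_q})$ is a non-edge; using graded-commutativity of $\Lambda_\bullet(\Gamma^{\op})$ to transport $a_{i_p}$ next to $a_{i_q}$ (up to a sign) and then applying the relation $a_{i_p}a_{i_q}=0$, the monomial $a_{i_1}\cdots a_{i_n}$ must vanish. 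Hence the non-zero monomials indexed by $\calS^{(n)}$ coincide exactly with the elements of $\calB_n$, and Definition~\ref{defin:PBW} is satisfied.

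The step that requires the most care is this comparison: one must check that tuples in $\calS^{(n)}$ not corresponding to cliques all yield the zero element of $\Lambda_n(\Gamma^{\op})$, so that the set of monomials appearing in Definition~\ref{defin:PBW} effectively reduces to the clique basis $\calB_n$. This relies on the monomial character of the defining relations of the exterior Stanley--Reisner algebra together with the anti-commutativity of $\Lambda_\bullet(\Gamma^{\op})$, and is where the combinatorial description of $\calB_n$ obtained in the preceding section is used in an essential way.
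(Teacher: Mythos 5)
Your argument follows the same route as the paper's proof: exhibit $\sV^{\op}$ as a set of PBW generators by determining $\calS$ explicitly and then matching the monomials indexed by $\calS^{(n)}$ against the clique basis $\calB_n$. In one respect you are more careful than the paper: you correctly exclude from $\calS$ the pairs $(i,j)$ with $i<j$ and $(v_i,v_j)\notin\sE$, since $a_i\otimes a_j$ is itself a defining relation and hence $(i,j)$ is a leading pair; the paper's proof instead takes $\calS$ to consist of \emph{all} increasing pairs, listing only the relations $a_ja_i=\alpha a_ia_j$ and overlooking $a_ia_j=0$ for non-edges.

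However, your concluding step does not close the argument as Definition~\ref{defin:PBW} is stated --- and in fact you have put your finger on the weak point of the paper's own proof. The definition requires that the family of \emph{all} monomials $a_{i_1}\cdots a_{i_n}$ with $(i_1,\ldots,i_n)\in\calS^{(n)}$ be a basis of $A_n$, and membership in $\calS^{(n)}$ constrains only \emph{consecutive} pairs, so $\calS^{(n)}$ in general properly contains the set of $n$-cliques. For instance, if $\Gamma$ is the path on $v_1,v_2,v_3$ with edges $(v_1,v_2),(v_2,v_3)$ and $(v_1,v_3)\notin\sE$, then with the natural order one gets $(1,2,3)\in\calS^{(3)}$ while $a_1a_2a_3=0$ and $A_3=0$: the prescribed ``basis'' of $A_3$ is $\{0\}$ rather than $\varnothing$. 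Discarding the vanishing monomials, as you do, proves the weaker statement that the \emph{nonzero} normal monomials form a basis (they are exactly $\calB_n$, by the clique description and Lemma~\ref{lem:cliques}); it does not verify the definition as written. In the path example the defect disappears after reordering (put $v_2$ first), but for the cycle of length $5$ every ordering of $\sV^{\op}$ leaves some vertex with one neighbour before it and one after it (otherwise the orientation from smaller to larger would make the cycle bipartite), so $\calS^{(3)}\neq\varnothing$ while $A_3=0$ and no reordering of the standard generators helps. So either Definition~\ref{defin:PBW} is to be read as requiring the nonzero normal monomials to form a basis --- under which reading your proof, and the paper's, is complete --- or an additional idea is genuinely needed. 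At a minimum you should state which reading you are using: as written, ``the non-zero monomials indexed by $\calS^{(n)}$ coincide with $\calB_n$'' does not by itself yield ``Definition~\ref{defin:PBW} is satisfied''.
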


\begin{proof}
Set $\calS^{(n)}=\{(i_1,\ldots,i_n)\mid1\leq i_1<\ldots<i_n\leq d\}$ for every $n\geq1$.
Then one may write the relations of $A_\bullet$ as $a_ja_i=\alpha a_ia_j$, with $i<j$ and $\alpha=-1$ if $(v_i,v_j)\in\sE$
and $\alpha=0$ otherwise, and moreover $a_i^2=0$ for all $i$.
So, the sets $\calS^{(n)}$ are as in Definition~\ref{defin:PBW}.
Thus, the set $\sV^{\mathrm{op}}=\{a_1,\ldots,a_d\}$ is a set of PBW-generators of $A_\bullet$,
as $\{v_{i_1}\cdots v_{i_n}\mid(i_1,\ldots,i_n)\in\calS^{(n)}\}=\calB_n$ --- which is a basis of $A_n$ ---
for every $n\geq0$.
\end{proof}

%%%%%%%%%%%%%%%%%%%%%%%%%%%%%%%%%%%%%%%%%%%%%%%%%%%5
\subsection{Strong Koszulity}

The first result we get is the strong Koszulity of $\Lambda_\bullet(\Gamma^{\op})$, regardless of the graph $\Gamma$.

\begin{thm}\label{thm:strong testo}
The exterior Stanley-Reisner algebra $\Lambda_\bullet(\Gamma^{\op})$ is strongly Koszul for any graph $\Gamma=(\sV,\sE)$.
\end{thm}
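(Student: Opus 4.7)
The plan is to use the distinguished basis $\sV^{\op}=\{a_1,\ldots,a_d\}$ of $\Lambda_1(\Gamma^{\op})$ and verify the colon-ideal condition of Definition~\ref{defin:SK} by a direct combinatorial computation in the clique basis. Given a subset $\calS\subseteq\sV^{\op}$ and an element $a_s\in\sV^{\op}$ with $a_s\notin\calS$, I would set $I=(\calS)$ and propose
\[
 \calS'=\calS\cup\{a_s\}\cup\{a_j\in\sV^{\op}\mid j\neq s,\ (v_j,v_s)\notin\sE\}\subseteq\sV^{\op}
\]
as a generating set for $I:(a_s)$, then aim to prove $I:(a_s)=(\calS')$.

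The inclusion $(\calS')\subseteq I:(a_s)$ would be immediate: elements of $\calS$ already lie in $I$; $a_s\cdot a_s=0$ because $\Lambda_\bullet(\Gamma^{\op})$ is a quotient of the exterior algebra; and $a_j\cdot a_s=0$ whenever $(v_j,v_s)\notin\sE$, by the defining relations of Definition~\ref{defin:lambda gamma}. Thus every generator of $\calS'$ is annihilated by $a_s$ modulo $I$.

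For the reverse inclusion, I would take a homogeneous $b\in I:(a_s)$ of degree $n$, expand it in the clique basis $b=\sum_{\Delta\in\calB_n}c_\Delta\Delta$, and split the monomials into three classes: (i) $a_s\in\sV(\Delta)$, so $\Delta\cdot a_s=0$ and $\Delta\in(a_s)\subseteq(\calS')$; (ii) $a_s\notin\sV(\Delta)$ but $\Delta\cdot a_s=0$, which forces some $a_j\in\sV(\Delta)$ with $(v_j,v_s)\notin\sE$ (otherwise $\sV(\Delta)\cup\{a_s\}$ would span a clique and the product would be nonzero), so $a_j\in\calS'$ and $\Delta\in(\calS')$; and (iii) $a_s\notin\sV(\Delta)$ with $\Delta\cdot a_s=\pm\Delta^{+}\in\calB_{n+1}$, where $\sV(\Delta^{+})=\sV(\Delta)\cup\{a_s\}$. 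Only class (iii) contributes to $b\cdot a_s$, and distinct class-(iii) monomials produce distinct basis elements $\Delta^{+}$, so linear independence in $\Lambda_{n+1}(\Gamma^{\op})$ transfers the condition $b\cdot a_s\in I$ to each individual $\Delta^{+}$. Lemma~\ref{lem:cliques} then gives $\sV(\Delta^{+})\cap\calS\neq\varnothing$ for every class-(iii) $\Delta$ with $c_\Delta\neq0$; since $a_s\notin\calS$, this forces $\sV(\Delta)\cap\calS\neq\varnothing$, so $\Delta\in I\subseteq(\calS')$. Summing the three classes yields $b\in(\calS')$.

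The only real content lies in the class-(iii) analysis: one must exploit the linear independence of the $\Delta^{+}$'s to extract information about individual summands from $b\cdot a_s\in I$, and then apply Lemma~\ref{lem:cliques} to read membership in $I$ off the vertex supports. Cases (i) and (ii) reduce to the single relations $a_s^2=0$ and $a_j a_s=0$ for $(v_j,v_s)\notin\sE$, so I expect no genuine obstacle beyond carefully bookkeeping the three classes of clique monomials.
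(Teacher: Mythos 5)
Your proposal is correct and follows essentially the same route as the paper: your generating set $\calS'=\calS\cup\{a_s\}\cup\{a_j\mid (v_j,v_s)\notin\sE\}$ is exactly the set $\calS'\cup\calS''$ used there, and both arguments rest on Lemma~\ref{lem:cliques} together with the observation that the surviving products $\Delta\cdot a_s$ are distinct clique-basis elements. The only difference is presentational: the paper argues by contraposition (showing $b\notin(\calS)$ implies $a_{i_r}b\notin I$), whereas you argue directly via your three-class decomposition, which is a harmless logical repackaging (plus the standard reduction to homogeneous $b$, justified since the colon ideal is graded).
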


\begin{proof}
Set $A_\bullet=\Lambda_\bullet(\Gamma^{\op})$.
Our goal is to show that the basis $\sV^{\op}$ is the suitable basis of $A_1$ fulfilling the condition
as in Definition~\ref{defin:SK}.

Fix a subset $\calY=\{a_{i_1},\ldots,a_{i_n}\}$ of $\sV^{\op}$.
For $1\leq r\leq n$ set $\calS'=\{a_{i_1},\ldots,a_{i_{r-1}}\}$,
$I=(a_{i_1},\ldots,a_{i_{r-1}})=(\calS')$ and
\[
 J=I:(a_{i_r})=\{b\in A_\bullet\mid a_{i_r}\cdot b\in(a_{i_1},\ldots,a_{i_{r-1}})\}.
\]
Moreover, set  $\calS''=\{a_j\in\sV^{\op}\mid a_ja_{i_r}=0\}$ and 
\[ \calS=\calS'\cup\calS'' =\sV^{\op}\smallsetminus\left\{a_j\mid j\notin\{i_1,\ldots,i_{r-1}\}\text{ and }a_ja_{i_r}\neq0\right\}.
\]
Note that $a_{i_r}\in\calS$, as $a_{i_r}^2=0$.
In particular, $a_{i_r}a_j\in I$ if, and only if, $a_j\in \calS$, as $\calB_2$ is a basis of $A_2$.
Hence,  $\calS\subseteq J_1$, and $I\subseteq (\calS)\subseteq J$.
We claim that the ideals $(\calS)$ and $J$ coincide.

Let $b\in A_\bullet$ be such that $b\notin (\calS)$.
%We show that $b\notin J$.
Thus, one may write
\begin{equation}\label{eq:Gamma SK}
 b=\alpha_1\Delta_1+\ldots+\alpha_m\Delta_m,\qquad \alpha_h\in\F^\times,
\end{equation}
where $\Delta_h\in\calB_{n_h}$ for each $h\in\{1,\ldots,m\}$.
By Lemma~\ref{lem:cliques}, $\Delta_h\in (\calS)$ --- respectively $\Delta\in (\calS')=I$,
$\Delta\in (\calS'')$ --- if, and only if, the intersection of $\sV(\Delta)$ with $\calS$ ---
respectively with $\calS'$ and with  $\calS''$ --- is not empty.
Since $b\notin(\calS)$, one has $\Delta_h\notin\calB(\calS)_{n_h}$ for some $h$ in \eqref{eq:Gamma SK},
and in this case $a_{i_r}\cdot \Delta_h\neq0$.
Therefore, one obtains
\begin{equation}\label{eq:SK long sum}
 \begin{split}   a_{i_r}b &= a_{i_r}\cdot \sum_{\Delta_h\in (\calS)}\alpha_h\Delta_h+a_{i_r}\cdot\sum_{\Delta_h\notin(\calS)}\alpha_h\Delta_h\\
   &= \sum_{\Delta_h\in I}\alpha_h\cdot a_{i_r}\Delta_h+\sum_{\Delta_h\notin(\calS)}\alpha_h\cdot a_{i_r}\Delta_h,
  \end{split}
\end{equation}
where $a_{i_r}\Delta_h\in\calB_{n_h+1}\smallsetminus\calB(\calS)_{n_h+1}$ and $a_{i_r}\Delta_h\neq a_{i_r}\Delta_{h'}$
for every $\Delta_h,\Delta_{h'}\notin(\calS)$, $h'\neq h$.
(Note that if $\Delta_h\in\calB(\calS'')_{n_h}$ then $a_{i_r}\Delta_h=0$.)
Consequently, the right-side summand of \eqref{eq:SK long sum} is not trivial, and by Lemma~\ref{lem:cliques}
it does not lie in $I$, whereas the left-side summand lies in $I$, so that $a_{i_r}b\notin I$.
Therefore, $b\notin J$, and this proves the inclusion $J\subseteq(\calS)$.
\end{proof}

Theorem~\ref{thm:strong} follows from Theorem~\ref{thm:strong testo}
together with Proposition~\ref{prop:RAAA cohomology}.

\begin{rem}\rm
 Theorem~\ref{thm:strong testo} provides a new proof of the fact that the algebra $\Lambda_\bullet(\Gamma^{\mathrm{op}})$
 is Koszul.
\end{rem}

%%%%%%%%%%%%%%%%%%%%%%%%%%%%%%%%%%%%%%%%%%%%%%%%%%%%
\subsection{Universal Koszulity}
The following is a direct consequence of Proposition~\ref{prop:directprod UK} and of \eqref{eq:cohom decomp}.

\begin{prop}\label{prop:freeprod UK}
 Let $\Gamma$ be the disjoint union of two graphs $\Gamma_1$ and $\Gamma_2$.
Then $\Lambda_\bullet(\Gamma^{\mathrm{op}})$ is universally Koszul if, and only if, both  $\Lambda_\bullet(\Gamma_1^{\mathrm{op}})$
and $\Lambda_\bullet(\Gamma_2^{\mathrm{op}})$ are universally Koszul.
\end{prop}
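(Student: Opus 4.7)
The plan is to reduce the statement directly to Proposition~\ref{prop:directprod UK} via the isomorphism~\eqref{eq:cohom decomp}.

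First I would record that, since $\Gamma = \Gamma_1 \,\dot\cup\, \Gamma_2$, no edge of $\Gamma$ connects a vertex of $\Gamma_1$ to a vertex of $\Gamma_2$. Writing $V_i$ for the $\F$-span of $\sV(\Gamma_i)^{\op}$, the relation ideal defining $\Lambda_\bullet(\Gamma^{\op})$ inside $\Lambda_\bullet(V_1 \oplus V_2)$ is therefore generated by (i) the ``intrinsic'' relations coming from non-edges inside each $\Gamma_i$, together with (ii) all cross products $a_i a_j$ and $a_j a_i$ for $a_i \in \sV(\Gamma_1)^{\op}$ and $a_j \in \sV(\Gamma_2)^{\op}$. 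Comparing with Definition~\ref{defin:koszul}~(a) of the direct product, this is exactly the relation space appearing in the direct product construction, so
\[
 \Lambda_\bullet(\Gamma^{\op}) \;\simeq\; \Lambda_\bullet(\Gamma_1^{\op}) \sqcap \Lambda_\bullet(\Gamma_2^{\op}).
\]
Equivalently, this identification follows at once from \eqref{eq:cohom decomp} applied separately to $\Gamma$, $\Gamma_1$, $\Gamma_2$, combined with the associativity of $\sqcap$, and it does not require that $\Gamma_1$ or $\Gamma_2$ be themselves connected.

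Given this isomorphism, the proposition follows immediately by invoking Proposition~\ref{prop:directprod UK}, which asserts that $A_\bullet \sqcap B_\bullet$ is universally Koszul if and only if both $A_\bullet$ and $B_\bullet$ are. Applied to the factors $A_\bullet = \Lambda_\bullet(\Gamma_1^{\op})$ and $B_\bullet = \Lambda_\bullet(\Gamma_2^{\op})$, this yields the claimed equivalence.

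I do not anticipate a genuine obstacle. The only point requiring a moment's care is verifying that the exterior Stanley--Reisner algebra of a disjoint union of graphs coincides with the \emph{direct product} (as opposed to, for instance, the wedge product) of the corresponding algebras; this is a direct check from the definitions, since the absence of edges between the two parts forces all cross products, rather than their skew-symmetrizations, to vanish. Once this bookkeeping is settled, the result is formal.
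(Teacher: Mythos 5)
Your proof is correct and follows essentially the same route as the paper, which states the proposition as a direct consequence of Proposition~\ref{prop:directprod UK} and the decomposition \eqref{eq:cohom decomp}. Your explicit check that the disjoint union of graphs yields the direct product $\sqcap$ (and not the wedge product) of the exterior Stanley--Reisner algebras, valid even when $\Gamma_1$ and $\Gamma_2$ are not connected, is exactly the bookkeeping the paper leaves implicit.
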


\begin{thm}\label{thm:RAAGs UK}
Let $\Gamma=(\sV,\sE)$ be a graph.
The exterior Stanley-Reisner algebra $\Lambda_\bullet(\Gamma^{\op})$ is universally Koszul
if, and only if, $\Gamma$ has the diagonal property.
\end{thm}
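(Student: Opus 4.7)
The plan is to prove the two implications separately: induction on the construction of elementary-type RAAGs for the ``if'' direction, and a reduction to the forbidden subgraphs $P_4$ and $C_4$ for the ``only if'' direction.

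For the ``if'' direction, suppose $\Gamma$ has the diagonal property; by Proposition~\ref{thm:ETRAAGs} the group $G_\Gamma$ is of elementary type, so $\Gamma$ is constructible from a single vertex by iterated disjoint unions and cones (Definition~\ref{defin:ETRAAGs}). I argue by induction on this construction. The base case $\Gamma = \{v\}$ gives $\Lambda_\bullet(\Gamma^{\op}) \simeq \F[a]/(a^2)$, which is universally Koszul by the examples following Definition~\ref{prop:def UK}. If $\Gamma = \Gamma_1 \dot\cup \Gamma_2$, then the decomposition~\eqref{eq:cohom decomp} together with Proposition~\ref{prop:freeprod UK} transfers universal Koszulity from the two factors to $\Lambda_\bullet(\Gamma^{\op})$. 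If $\Gamma$ is the cone on $\Gamma_0$ (adjoin a vertex $w$ adjacent to every vertex of $\Gamma_0$), then the new generator $a_w$ merely anticommutes with every existing one, so $\Lambda_\bullet(\Gamma^{\op}) \simeq \Lambda_\bullet(\Gamma_0^{\op}) \wedge \Lambda_\bullet(\F a_w)$, and Proposition~\ref{prop:wedgeprod UK} closes the induction.

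For the ``only if'' direction I argue by contraposition. Suppose $\Gamma$ does not have the diagonal property; then $\Gamma$ contains a full subgraph $\Gamma'$ isomorphic to $P_4$ or $C_4$. First I produce explicit counterexamples in these small cases, taking $I = (0)$. For $\Gamma' = P_4$ (labelled as in~\eqref{eq:C P}), let $b = a_1 + a_4$. A direct calculation on the basis $\calB_2$ yields $\mathrm{Ann}(b) \cap A_1 = \langle a_1, a_4\rangle$, while $A_2 \subseteq \mathrm{Ann}(b)$ since $A_3 = 0$ (no $3$-cliques in $P_4$). By Lemma~\ref{lem:cliques} the ideal $(a_1, a_4)$ meets $A_2$ only in $\langle a_1 a_2, a_3 a_4\rangle$, so $a_2 a_3 \in \mathrm{Ann}(b) \setminus (a_1, a_4)$, forcing $(0):(b) \notin \calL$. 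For $\Gamma' = C_4$ take $b = a_1 + a_2$: an analogous computation yields $\mathrm{Ann}(b) \cap A_1 = \langle a_1 + a_2\rangle$ and $A_2 \subseteq \mathrm{Ann}(b)$, while the ideal $(a_1+a_2)$ misses $a_3 a_4$ in degree~$2$, again forcing $(0):(b) \notin \calL$.

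To transfer these counterexamples to $\Lambda_\bullet(\Gamma^{\op})$, consider the natural surjection $\pi \colon \Lambda_\bullet(\Gamma^{\op}) \twoheadrightarrow \Lambda_\bullet(\Gamma'^{\op})$ sending $a_v \mapsto 0$ for $v \notin \sV(\Gamma')$; its kernel $J = (a_v : v \notin \sV(\Gamma'))$ lies in $\calL$. Setting $I := J$ and keeping the same $b$ (now in $A_1 \setminus J_1$), one checks directly that $\pi(I:(b)) = \mathrm{Ann}(\pi(b))$; hence, if $I:(b)$ were in $\calL(\Lambda_\bullet(\Gamma^{\op}))$, its image under $\pi$ would lie in $\calL(\Lambda_\bullet(\Gamma'^{\op}))$, contradicting the previous step. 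The main obstacle is spotting the correct counterexamples for $P_4$ and $C_4$—these are not obvious, since for several natural choices of $b$ (for example a single generator) the annihilator happens to be in $\calL$. Once the pairs $(I, b)$ are identified, both the inductive ``if'' direction and the lifting argument are routine applications of the propositions established in Sections~2 and~3.
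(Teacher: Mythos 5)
Your proof is correct, and the ``if'' direction is essentially identical to the paper's (induction on the cone/disjoint-union structure guaranteed by the diagonal property, closing with Propositions~\ref{prop:freeprod UK} and~\ref{prop:wedgeprod UK}). The ``only if'' direction reaches the same witness --- a linear form $b$ supported on the two ``end'' vertices of the forbidden subgraph, whose colon ideal contains the product of the two middle generators but is not $1$-generated --- by a genuinely different route. The paper stays inside the ambient algebra $\Lambda_\bullet(\Gamma^{\op})$ with $I=(0)$ and $b=a_1+a_4$ for both $C_4$ and $P_4$ at once: it shows every $c\in\mathrm{Ann}(b)_1$ has vanishing coefficients on $a_2$ and $a_3$, so no sum $\sum c_hc_h'$ with $c_h\in\mathrm{Ann}(b)_1$ can equal $a_2a_3$. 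You instead run the computation in the $4$-vertex algebra $\Lambda_\bullet(\Gamma'^{\op})$ (with a different, but equally valid, choice $b=a_1+a_2$ for $C_4$) and then lift it through the retraction $\pi\colon\Lambda_\bullet(\Gamma^{\op})\twoheadrightarrow\Lambda_\bullet(\Gamma'^{\op})$ by taking $I=\ker\pi=(a_v: v\notin\sV(\Gamma'))\in\calL$; the identities $I:(b)=\pi^{-1}(\mathrm{Ann}(\pi(b)))$ and $\pi(K)_1=\pi(K_1)$ that your argument needs do hold, since $\pi$ is a surjective graded algebra map and $\Gamma'$ is an induced subgraph. Your version costs a small amount of extra bookkeeping (you must use a nonzero $I$, whereas the paper exhibits the failure already at $I=(0)$), but it buys a reusable general principle --- universal Koszulity of $\Lambda_\bullet(\Gamma^{\op})$ passes to induced subgraphs --- which cleanly separates the combinatorial core ($P_4$ and $C_4$) from the reduction. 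Both arguments are complete; no gap.
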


\begin{proof}
Set $A_\bullet=\Lambda_\bullet(\Gamma^{\mathrm{op}})$ and $\sV=\{v_1,\ldots,v_d\}$.

Suppose first that $\Gamma$ has the diagonal property.
We proceed by induction on $d$.
If $d=1$ then $A_\bullet\simeq \F[a]/(a^2)$, which is universally Koszul.
If $d=2$ then either $A_\bullet$ is the exterior algebra generated by $V$, or it is the trivial algebra generated
by $V$, with $V$ a space of dimension 2, so that it is universally Koszul.

If $d\geq3$, then either $\Gamma$ decomposes as disjoint union of two proper full subgraphs $\Gamma_1$ and $\Gamma_2$,
or it is the cone graph with basis a full subgraph $\tilde\Gamma$.
In the former case, both $\Lambda_\bullet(\Gamma_1^{\mathrm{op}})$ and $\Lambda_\bullet(\Gamma_2^{\mathrm{op}})$ are 
universally Koszul by induction, so that also
$$A_\bullet\simeq\Lambda_\bullet(\Gamma_1^{\mathrm{op}})\sqcap\Lambda_\bullet(\Gamma_2^{\mathrm{op}})$$
is universally Koszul by Proposition~\ref{prop:freeprod UK};
in the latter case one has
$$A_\bullet\simeq\Lambda_\bullet(\tilde\Gamma^{\mathrm{op}})\wedge B_\bullet,\qquad B_\bullet\simeq\F[b]/(b^2),$$
and $\Lambda_\bullet(\tilde\Gamma^{\mathrm{op}})$ is universally Koszul by induction, so that also $A_\bullet$ is 
universally Koszul by Proposition~\ref{prop:wedgeprod UK}.

Suppose now that $\Gamma$ does not have the diagonal property.
Thus, $\Gamma$ contains a full subgraph $\Gamma'$ isomorphic to $C_4$ or $P_4$.
Set $\sV=\{v_1,\ldots,v_d\}$, with $d\geq4$, such that $\sV(\Gamma')=\{v_1,v_2,v_3,v_4\}$,
with the vertices labelled as in \eqref{eq:C P}.
Also, set $\sV^{\op}=\{a_1,\ldots,a_d\}$ and $A_\bullet=\Lambda_\bullet(\Gamma^{\op})$.

Set $b=a_1+a_4$, and set $J=(0):(b)=\mathrm{Ann}(b)$.
Then $$b\cdot a_2a_3=a_1a_2a_3+a_4a_2a_3=0,$$ and $a_2a_3\in J_2$.
We claim that $a_2a_3\notin J_1\cdot A_1$, i.e, $a_2a_3$ does not lie in the ideal generated by $J_1$,
so that $J\notin \calL(A_\bullet)$.
Clearly, $a_2,a_3\notin J_1$, as $ba_2=a_1a_2\neq0$ and $a_3b=a_3a_4\neq0$.
Suppose there exist $c_1,\ldots,c_r\in J_1$ and $c_1',\ldots,c_r'\in A_1$ such that $c_1c'_1+\ldots+c_rc_r'=a_2a_3$,
and write 
\[
 c_h=\alpha_{1,h}a_1+\ldots+\alpha_{d,h}a_d\qquad\text{and}\qquad c'_h=\beta_{1,h}a_1+\ldots+\beta_{d,h}a_d
\]
for each $h\in\{1,\ldots,r\}$, with $\alpha_{i,h},\beta_{i,h}\in\F$.
Since $c_h\in \mathrm{Ann}(b)$ for every $h$, one has $\alpha_{2,h}=\alpha_{3,h}=0$,
otherwise $bc_h=\alpha_{2,h}a_1a_2-\alpha_{3,h}a_3a_4+\Delta$, with $\Delta\in A_2$ a combination of elements $a_ia_j$
with $i<j$, $(1,2),(3,4)\neq(i,j)$, and $bc_h\neq0$ as $\calB_2=\{a_ia_j\mid 1\leq i<j\leq d,a_ia_j\neq0\}$
is a basis of $A_2$.
Thus, one obtains
\[\begin{split}
    \sum_{h=1}^rc_hc_h' &= \sum_{i<j}a_ia_j\left(\sum_{h=1}^r\alpha_{i,h}\beta_{j,h}-\sum_{h=1}^r\alpha_{j,h}\beta_{i,h}\right)\\
    &= a_2a_3\left(\sum_{h=1}^r\alpha_{2,h}\beta_{3,h}-\sum_{h=1}^r\alpha_{3,h}\beta_{2,h}\right)+\Delta \\ &=0+\Delta,
  \end{split}
\]
with $\Delta\in A_2$ a combination of elements $a_ia_j$, $(i,j)\neq(2,3)$, a contradiction.

Therefore, $a_2a_3\in J_2\smallsetminus J_1\cdot A_1$, and consequently $J\notin\calL(A_\bullet)$, 
and $A_\bullet$ is not universally Koszul.
\end{proof}

Theorem~\ref{thm:univ} follows from Theorem~\ref{thm:RAAGs UK},
together with Proposition~\ref{prop:RAAA cohomology}, equation \eqref{eq:cohom decomp} and
Proposition~\ref{prop:freeprod UK}.

In the following two examples we show explicitly that $\Lambda_\bullet(\Gamma^{\mathrm{op}})$ is universally
Koszul for two graphs with the diagonal property, without using Proposition~\ref{prop:wedgeprod UK}.

\begin{exam}\label{prop:starUK}\rm
 Let $\Gamma=(\sV,\sE)$ be a star graph with $\sV=\{w,v_1,\ldots,v_d\}$ and $\sE=\{(w,v_1),\ldots,(w,v_d)\}$.
 \[
  \xymatrix{ & & w\ar@{-}[dll]\ar@{-}[dl]\ar@{..}[d]\ar@{-}[dr] & \\ v_1 & v_2 &   & v_d}
 \]
Set $\sV^{\op}=\{a_w,a_1,\ldots,a_d\}$ and $A_\bullet=\Lambda_\bullet(\Gamma^{\op})$.
Then
\[\begin{split}
   A_1 &= \langle a_w,a_1,\ldots,a_d\rangle, \\ A_2 &= \langle a_w a_1,\ldots,a_w a_d\rangle= a_w\wedge A_1,\\
   A_n &= 0\quad\text{for }n\geq3.
  \end{split}\]
In particular, $a_ia_j=0$ for all $i,j$.
 For $I\in\calL(A_\bullet)$ and $b\in A_1\smallsetminus I_1$, write $b=\alpha_w a_w+\sum_i \alpha_ia_i$,
with $\alpha_w,\alpha_i\in\F$.
Set $J=I:(b)$. 
Then $b\in J$, and moreover $J_2=A_2$, as $A_2\cdot b\subseteq A_3=0$. 
If $\alpha_w\neq0$, then $A_1\cdot b= A_1\cdot\alpha_w a_w=A_2=J_2$,
so that $J$ is 1-generated, i.e., $J\in\calL(A_\bullet)$.
If $\alpha_w=0$, then $a_ib=0$ for all $i\in\{1,\ldots,d\}$, and thus $a_i\in J_1$.
In this case, $a_wa_i\in A_1\cdot J_1\subseteq J_2$ for all $i$, and therefore $A_1\cdot J_1=A_2$,
so that $J$ is 1-generated, i.e., $J\in\calL(A_\bullet)$.
Thus, $A_\bullet$ is universally Koszul.
\end{exam}

\begin{exam}\label{prop:Gamma4 2}\rm
Let $\Gamma$ be the graph 
\[	\xymatrix{  v_3\ar@{-}[d]\ar@{-}[r] & v_2\ar@{-}[d] \\ v_0\ar@{-}[r]\ar@{-}[ru] & v_1 }	\]
The graph $\Gamma$ is the cone with vertex $v_0$ and basis the full subgraph with vertices $v_1,v_2,v_3$.
Set $\sV^{\mathrm{op}}=\{a_0,a_1,a_2,a_3\}$, with $a_i$ dual to $v_i$ for all $i$,
and $A_\bullet=\Lambda_\bullet(\Gamma^{\mathrm{op}})$.
Then 
\[\begin{split}
   A_1 &= \langle a_0,a_1,a_2,a_3\rangle, \\ 
   A_2 &=\langle a_0a_1,a_0a_2,a_0a_3,a_1a_2,a_2a_3\rangle=A_1\wedge\langle a_0,a_2\rangle,\\
   A_3 &=\langle a_0a_1a_2,a_0a_2a_3\rangle,\\
   A_n &= 0\quad\text{for }n\geq3.
  \end{split}\]
Set $$b=\alpha_0a_0+\alpha_1a_1+\alpha_2a_2+\alpha_3a_3\neq0.$$
Without loss of generality we can suppose that either $b=a_2$ (if $\mathrm{Ann}(b)_1$ has dimension 1)
or $b=a_3$ (if $\mathrm{Ann}(b)_1$ has dimension 2), by performing a change of basis for $A_1$ inducing
an automorphism of $A_\bullet$.
For $I=\calL(A_\bullet)$ such that $b\notin I_1$, set $J=I:(b)$, and $K=\mathrm{Ann}(b)\subseteq J$.
In both cases one has $K_3=J_3=A_3$, whereas $K_1 =\langle b\rangle$ and
$K_2=\langle ba_0, ba_1, ba_3\rangle$, for $b=a_2$, and $K_1 =\langle a_1,a_3\rangle$ and
$K_2=\langle a_0a_1,a_0a_3,a_1a_2,a_2a_3\rangle$, for $b=a_3$.
Moreover, $J_1=K_1+I_1$.
In particular, $K\in\calL(A_\bullet)$ in both cases, so that if $I=(0)$, then $J=K\in\calL(A_\bullet)$.

Suppose now that $I\neq(0)$.
Then either $\dim(I_3)=1$ or $\dim(I_3)=2$.
In particular, $\dim(I_3)=1$ if, and only if, $I=(b')$ for some $b'\in\langle a_1,a_3\rangle$;
whereas $\dim(I_3)=2$ if, and only if, either $I=(b')$ for some $b'\notin\langle a_1,a_3\rangle$, or $\dim(I_1)\geq2$.
Altogether, one has the following cases:
\begin{itemize}
 \item[(a)] Suppose $\dim(I_3)=1$. If $b=a_3$, we may assume without loss of generality that $b'=a_1$, so that $J_2=K_2$.
 If $b=a_2$ then $J_2=K_2+\langle a_0b'\rangle\subseteq(J_1)$.
 \item[(b)] Suppose $I_3=A_3$, so that $J_2=A_2$.
 If $b=a_3$ then one may find $b''=\gamma_0a_0+\gamma_1a_1+\gamma_2a_2\in J_1=K_1+I_1$, with $\gamma_i\in \F$,
 $(\gamma_0,\gamma_2)\neq(0,0)$, so that $a_0a_2\in (b'')_2+K_2$.
 Therefore $A_2=K_2\oplus\langle a_0a_2\rangle\subseteq (J_1)$.
 
 \noindent If $b=a_2$, then one has two further subcases.
 If $\dim(I_1)=1$, then one may find $b''=a_0+\gamma_1a_1+\gamma_3a_3\in J_1=K_1+I_1$, with $\gamma_1,\gamma_3\in\F$,
 as $b'\notin\langle a_1,a_3\rangle$ and $b=a_2,b'\in J_1$.
 Thus $A_2=K_2\oplus\langle a_0a_1,a_0a_3\rangle\subseteq(J_1)$.
 On the other hand, if $\dim(I_1)\geq2$ and $J_1$ does not contain an element $b''$ as above,
 then necessarily $a_1,a_3\in J_1$; in both cases $a_0a_1,a_0a_3\in(J_1)$ and $A_2=K_2\oplus\langle a_0a_1,a_0a_3\rangle\subseteq(J_1)$.
\end{itemize}
In every case one has $J\in\calL(A_\bullet)$.
\end{exam}

\section{Pro-$p$ groups}

%%%%%%%%%%%%%%%%%%%%%%%%%%%%%%%%%%%%%%%%%%%%%%%%%%%%%%%%%%%%%%%%%%5

\subsection{Right-angled Artin pro-$p$ groups}\label{ssec:prop}

Fix a prime number $p$, and let $\F_p$ be the finite field with $p$ elements.
For a pro-$p$ group $\calG$, we consider $\F_p$ as continuous trivial $\calG$-module.
The {\sl continuous cochain} cohomology algebra 
\[H^\bullet(\calG,\F_p)=\bigoplus_{n\geq0} H^n(\calG,\F_p)\]
of $\calG$ with coefficients in $\F_p$,
endowed with the graded-commutative cup-product, is a graded $\F_p$-algebra.
In particular, one has isomorphisms of $p$-elementary abelian groups
\[
 H^1(\calG,\F_p)\simeq\mathrm{Hom}(\calG/\Phi(\calG),\F_p),
\]
where $\Phi(\calG)$ denotes the Frattini subgroup of $\calG$, i.e., the closed subgroup generated by all
$g^p$ and $[g,h]$, with $g,h\in\calG$.
For the definition and the properties of continuous cochain cohomology of pro-$p$ groups
we refer to \cite{nsw:cohn}*{\S~I.2--I.4}.

Given a graph $\Gamma$ we call the pro-$p$ completion $\calG_\Gamma$ of the RAAG $G_\Gamma$
a {\sl pro-p RAAG} with associated graph $\Gamma$.
Pro-$p$ RAAG behave pretty much like abstract RAAGs (see, e.g., \cite{KW}).
In particular, one has the following result by K.~Lorensen (cf. \cite{lorensen}*{Thm.~2.6}).

\begin{thm}\label{thm:prop cohomology}
Let $\Gamma$ be a graph. Then $H^\bullet(\calG_\Gamma,\F_p)\simeq H^\bullet(G_\Gamma,\F_p)$. 
\end{thm}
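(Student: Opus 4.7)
The plan is to deduce this from the cohomological \emph{goodness} of $G_\Gamma$ at $p$, in the sense of Serre. Recall that a discrete group $G$ is called $p$-good if, denoting by $\hat G_{(p)}$ its pro-$p$ completion, the natural inflation map
\[ H^n(\hat G_{(p)}, M) \longrightarrow H^n(G, M) \]
is an isomorphism for every $n \geq 0$ and every finite discrete $p$-primary $\hat G_{(p)}$-module $M$. Specialising to $M = \F_p$ with trivial action then yields the desired isomorphism $H^\bullet(\calG_\Gamma,\F_p)\simeq H^\bullet(G_\Gamma,\F_p)$.

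The first step is to settle the basic building blocks. The group $\Z$ is trivially $p$-good, since both cohomology theories give $\F_p$ in degrees $0$ and $1$ and vanish in higher degrees. Next, I would verify two closure properties. First, $p$-goodness is preserved under free products $G_1 \ast G_2$, provided $H^n(G_i,\F_p)$ is finite-dimensional in each degree: the Mayer--Vietoris sequences for the discrete free product and for the pro-$p$ free product $\hat G_{1,(p)}\ast\hat G_{2,(p)}$ are compatible with inflation, and the five-lemma applies. Second, $p$-goodness is preserved under direct product with $\Z$, using that pro-$p$ completion commutes with finite direct products and that both sides split via the Künneth formula (no Tor obstruction appears since $H^\bullet(\Z,\F_p)$ is free in each degree), with finite-dimensionality propagated inductively.

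For $\Gamma$ with the diagonal property, Proposition~\ref{thm:ETRAAGs} together with these two closure properties yield the result by induction on $|\sV(\Gamma)|$, taking $\Z$ as the base case. The main obstacle is the case where $\Gamma$ contains $C_4$ or $P_4$ as a full subgraph: the inductive decomposition via free products and cones is no longer available, and one must handle goodness directly. To overcome this, I would invoke the Salvetti cube complex, which realises $G_\Gamma$ as the fundamental group of a finite non-positively curved $K(\pi,1)$; this ensures $H^n(G_\Gamma,\F_p)$ is finite-dimensional for every $n$. Combined with the well-known residual $p$-finiteness of $G_\Gamma$ (via, e.g., a faithful unitriangular representation over $\Z$), a general comparison criterion — essentially the one exploited by Lorensen — promotes these properties to the full goodness isomorphism for every finite graph $\Gamma$, completing the proof.
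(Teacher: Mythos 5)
The paper does not prove this statement at all: it is quoted verbatim from Lorensen \cite{lorensen}*{Thm.~2.6}, so any comparison is really with Lorensen's argument. Your first two steps are sound: $\Z$ is $p$-good, and $p$-goodness (with finite-dimensional $\F_p$-cohomology in each degree) passes to free products via compatible Mayer--Vietoris sequences --- using that the pro-$p$ completion of $G_1\ast G_2$ is the free pro-$p$ product of the completions --- and to direct products with $\Z$ via K\"unneth. This correctly handles every $\Gamma$ with the diagonal property, i.e.\ the elementary-type RAAGs.

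The last paragraph, however, contains a genuine gap. There is no ``general comparison criterion'' asserting that a residually $p$-finite group with a finite $K(\pi,1)$ (hence finite-dimensional mod-$p$ cohomology) is $p$-good; goodness is strictly stronger than these hypotheses, as groups such as $\mathrm{SL}_n(\Z)$ for $n\geq 3$ (and their congruence subgroups, which are residually $p$ and of type $F$) illustrate. So for a graph containing $C_4$ or $P_4$ as a full subgraph --- which is exactly the case your inductive scheme cannot reach --- you have not produced an argument. What Lorensen actually does, and what you would need to add, is a third closure property: $p$-goodness is preserved under amalgamated products $A\ast_C B$ in which $C$ is a \emph{retract} of both $A$ and $B$ (so that the pro-$p$ completion of the amalgam is the proper pro-$p$ amalgam of the completions and Mayer--Vietoris again applies on both sides). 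Every RAAG decomposes, for any chosen vertex $v$, as
\[
G_\Gamma \;\simeq\; G_{\Gamma\smallsetminus\{v\}}\;\ast_{G_{\mathrm{lk}(v)}}\;\bigl(G_{\mathrm{lk}(v)}\times\Z\bigr),
\]
with $G_{\mathrm{lk}(v)}$ a retract of both factors, and induction on the number of vertices then covers arbitrary $\Gamma$ without any appeal to the diagonal property. Replacing your final paragraph by this decomposition would close the gap; the Salvetti complex and residual $p$-finiteness alone do not.
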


Moreover, one has the ``pro-$p$ equivalent'' of Lemma~\ref{lem:RAAGs freeprod} and of \eqref{eq:cohom decomp},
namely, if a graph $\Gamma$ decomposes into connected components $\Gamma_1,\ldots,\Gamma_r$,
then the pro-$p$ RAAG $\calG_\Gamma$ decomposes as free pro-$p$ product
$\calG_\Gamma\simeq \calG_{\Gamma_1}\ast_{\hat p}\cdots\ast_{\hat p}\calG_{\Gamma_r}$, where $\ast_{\hat p}$
denotes the free product in the category of pro-$p$ groups (see \cite{ribzal}*{\S~9.1}), and
the $\F_p$-cohomology algebra of $\calG_\Gamma$ decomposes as direct product of quadratic algebras
\begin{equation}\label{eq:cohom decomp p}
 H^\bullet(\calG_\Gamma,\F_p)\simeq \Lambda_\bullet(\Gamma^{\op})\simeq
 \Lambda_\bullet(\Gamma_1^{\op})\sqcap\ldots\sqcap\Lambda_\bullet(\Gamma_r^{\op}).
\end{equation}
Thus, one may extend Theorem~\ref{thm:strong} and Theorem~\ref{thm:univ} to the class of pro-$p$ RAAGs.

\begin{thm}\label{thm:prop RAAGs}
Let $\Gamma$ be a graph and $\calG_\Gamma$ the associated pro-$p$ RAAG.
\begin{itemize}
 \item[(i)] The $\F_p$-cohomology algebra $H^\bullet(\calG_\Gamma,\F)$ is strongly Koszul and PBW. 
 \item[(ii)] The $\F_p$-cohomology algebra $H^\bullet(\calG_\Gamma,\F)$ is universally Koszul if, and only if,
 $\Gamma$ has the diagonal property.
 \end{itemize} 
\end{thm}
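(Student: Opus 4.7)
The proof is a direct transfer of the corresponding statements for abstract RAAGs through Lorensen's comparison theorem, so my plan is essentially to chase the isomorphism. First, I would invoke Theorem~\ref{thm:prop cohomology} to obtain an isomorphism of graded $\F_p$-algebras
\[
H^\bullet(\calG_\Gamma,\F_p)\simeq H^\bullet(G_\Gamma,\F_p),
\]
and then apply Proposition~\ref{prop:RAAA cohomology} to further identify this with the exterior Stanley--Reisner algebra $\Lambda_\bullet(\Gamma^{\op})$. Since strong Koszulity, universal Koszulity, and the PBW property are intrinsic properties of a quadratic graded algebra, any such property passes across an isomorphism of graded $\F_p$-algebras, so it suffices to work with $\Lambda_\bullet(\Gamma^{\op})$.

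For part (i), the strong Koszulity of $\Lambda_\bullet(\Gamma^{\op})$ is precisely the content of Theorem~\ref{thm:strong testo}, while the PBW property is Corollary~\ref{thm_PBW}; both transfer through the isomorphism above. For part (ii), the equivalence of universal Koszulity with the diagonal property of $\Gamma$ is the content of Theorem~\ref{thm:RAAGs UK}, which again transfers through the isomorphism. If one wishes to handle the case of disconnected $\Gamma$ explicitly, one may additionally invoke the pro-$p$ decomposition \eqref{eq:cohom decomp p} in conjunction with Proposition~\ref{prop:freeprod UK}, though this is already subsumed by the identification with $\Lambda_\bullet(\Gamma^{\op})$.

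There is no real obstacle to overcome: the nontrivial work has already been done, on the one hand in Theorems~\ref{thm:strong testo} and \ref{thm:RAAGs UK} together with Corollary~\ref{thm_PBW}, and on the other hand in Lorensen's theorem. The pro-$p$ version is therefore a clean corollary, of interest mainly because it places pro-$p$ RAAGs in the landscape of pro-$p$ groups whose mod-$p$ cohomology satisfies the enhanced Koszul properties conjectured for maximal pro-$p$ Galois groups.
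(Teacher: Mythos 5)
Your proposal is correct and follows exactly the route the paper takes: the theorem is deduced by combining Lorensen's comparison (Theorem~\ref{thm:prop cohomology}) with the identification $H^\bullet(G_\Gamma,\F_p)\simeq\Lambda_\bullet(\Gamma^{\op})$ and then citing Theorem~\ref{thm:strong testo}, Corollary~\ref{thm_PBW}, and Theorem~\ref{thm:RAAGs UK}. Your remark that the decomposition \eqref{eq:cohom decomp p} is subsumed by the identification with the Stanley--Reisner algebra is also accurate.
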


A class of pro-$p$ groups which are very similar to pro-$p$ RAAGs is the class of {\sl generalized pro-$p$ RAAGs},
introduced and studied in \cite{QSV}.
Given a graph $\Gamma=(\sV,\sE)$, a generalized pro-$p$ RAAG with associated graph $\Gamma$ is
a pro-$p$ group $\calG$ generated by $\sV=\{v_1,\ldots,v_d\}$ and with defining relations $[v_i,v_j]v_i^\alpha v_i^\beta$
for some $\alpha,\beta\in p\Z_p$, if $(v_i,v_j)\in\sE$ (moreover, $\alpha,\beta\in4\Z_2$ if $p=2$).
Namely, one has a presentation (of pro-$p$ group)
\[
 \calG=\left\langle v_1,\ldots,v_d \left\vert [v_i,v_j]v_i^{\alpha_{ij}} v_i^{\beta_{ij}}=1,(v_i,v_j)\in\sE,\alpha_{ij},\beta_{ij}\in p^\epsilon\Z_p\right.\right\rangle_{\hat p},
\]
with $\epsilon=1$ if $p>2$, $\epsilon=2$ otherwise.

A priori, a generalized pro-$p$ RAAG may have $\F_p$-cohomology algebra which is not quadratic ---
e.g., a generalized pro-$p$ RAAG $\calG$ may be a finite group, so that $H^\bullet(\calG,\F_p)$ is not quadratic unless
$p=2$ and $\calG$ is 2-elementary abelian (cf. \cite{QSV}*{Ex.~5.16}).
Yet, if $H^\bullet(\calG,\F_p)$ is quadratic for a generalized pro-$p$ RAAG $\calG$ with associated graph $\Gamma$,
then one has  $H^\bullet(\calG,\F_p)\simeq\Lambda_\bullet(\Gamma^{\op})$,
just like (pro-$p$) RAAGs (cf. \cite{QSV}*{Thm.~E}).
Hence, by Theorem~\ref{thm:strong testo} the $\F_p$-cohomology algebra of a generalized pro-$p$ RAAG
$\calG$ is strongly Koszul, too.
In \cite{QSV}*{\S~5.5-5.6} uncountably many examples of generalized pro-$p$ RAAGs are shown
to have quadratic $\F_p$-cohomology algebra --- e.g., if $\Gamma$ contains no triangles as full subgraphs
(cf. \cite{QSV}*{Thm.~F}).
Thus, one may deduce the following.

\begin{cor}
 Let $\calG$ be a generalized pro-$p$ RAAG with associated graph $\Gamma$,
and suppose that $H^\bullet(\calG,\F_p)$ is a quadratic algebra.
Then $H^\bullet(\calG,\F_p)$ is a strongly Koszul and PBW algebra.
Moreover, it is universally Koszul if, and only if, $\Gamma$ has the diagonal property.
\end{cor}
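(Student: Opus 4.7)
The plan is to reduce everything to the already-established results about the exterior Stanley--Reisner algebra $\Lambda_\bullet(\Gamma^{\op})$, so no direct cohomological computation on $\calG$ is needed. The crucial input has already been quoted from the previous work on generalized pro-$p$ RAAGs: under the standing hypothesis that $H^\bullet(\calG,\F_p)$ is a quadratic algebra, \cite{QSV}*{Thm.~E} provides an isomorphism of graded $\F_p$-algebras
\[
H^\bullet(\calG,\F_p)\;\simeq\;\Lambda_\bullet(\Gamma^{\op}).
\]
All three conclusions are therefore properties of this single algebra, and the proof reduces to invoking earlier results of the paper.

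First I would record the isomorphism above, pointing out that the quadraticity assumption is precisely what is needed for \cite{QSV}*{Thm.~E} to apply (a generalized pro-$p$ RAAG need not have quadratic cohomology in general, as the authors already remark just before the statement). Strong Koszulity of $H^\bullet(\calG,\F_p)$ is then immediate from Theorem~\ref{thm:strong testo} applied to $\Lambda_\bullet(\Gamma^{\op})$, and the PBW property follows at once from Corollary~\ref{thm_PBW}; these are properties of the underlying graded algebra and do not depend on which group $\calG$ realises it.

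For the second assertion, I would invoke Theorem~\ref{thm:RAAGs UK}, which characterises universal Koszulity of $\Lambda_\bullet(\Gamma^{\op})$ in terms of the diagonal property of $\Gamma$. Transported through the isomorphism, this gives both implications of the equivalence in the corollary.

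There is essentially no obstacle, since the corollary is a formal consequence of three theorems already in hand together with the cited identification of the cohomology algebra; the only point to be careful about is that the hypothesis "$H^\bullet(\calG,\F_p)$ is quadratic" is what licenses the use of \cite{QSV}*{Thm.~E}, and should be stated explicitly in the opening line of the proof.
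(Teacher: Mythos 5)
Your proposal is correct and follows exactly the paper's own route: the quadraticity hypothesis licenses the identification $H^\bullet(\calG,\F_p)\simeq\Lambda_\bullet(\Gamma^{\op})$ via \cite{QSV}*{Thm.~E}, after which strong Koszulity, the PBW property, and the universal Koszulity criterion are read off from Theorem~\ref{thm:strong testo}, Corollary~\ref{thm_PBW}, and Theorem~\ref{thm:RAAGs UK} respectively. Nothing is missing.
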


\begin{exam}\label{exam:prop star}\rm
Let $\calG$ be the pro-$p$ group with presentation
\[
 \calG\simeq\left\langle w,v_1,\ldots,v_d\mid [v_i,w]=v_i^{\alpha_i},i=1,\ldots,d,\alpha_i\in p^\epsilon\Z_p \right\rangle_{\hat p},
\]
where $\epsilon=1$ if $p>2$, $\epsilon=2$ otherwise.
Then $\calG$ is a generalized pro-$p$ RAAG with associated graph a star graph $\Gamma$ with center $w$.
Thus, by \cite{QSV}*{Thm.~F} one has $H^\bullet(\calG,\F_p)\simeq\Lambda_\bullet(\Gamma^{\mathrm{op}})$,
which is strongly and universally Koszul, and PBW.
\end{exam}

\begin{exam}\label{exam:propraag1}\rm
Let $\calG$ be the pro-$p$ group with presentation
\[ \calG\simeq\left\langle v_0,\ldots,v_4\mid [v_0,v_i]=v_0^{\alpha_i}v_i^{\beta_i}, [v_1,v_2]=v_1^{\alpha'_1}v_2^{\beta'_1},
[v_2,v_3]=v_2^{\alpha'_3}v_3^{\beta'_3} \right\rangle_{\hat p}\]
 with $i=1,\ldots,4$, and $\alpha_i,\beta_i,\alpha_i',\beta_i'\in p^\epsilon\Z_p$, where $\epsilon=1$ if $p>2$, $\epsilon=2$ otherwise.
Then $\calG$ is a generalized pro-$p$ RAAG with associated graph
  \[\begin{minipage}{0.1\textwidth}
    \begin{center}$\Gamma$ = \end{center}
   \end{minipage}
   \begin{minipage}{0.3\textwidth}
    \xymatrix@=0.5truecm{   & & & v_1\ar@{-}[dr]\ar@{-}[dl] & \\ v_4\ar@{-}[rr] && v_0\ar@{-}[rr] & & v_2
    \\  & & & v_3\ar@{-}[ur]\ar@{-}[ul] &}
   \end{minipage}
\]
The graph $\Gamma$ is the cone graph with basis the full subgraph with vertices $v_1,\ldots,v_4$, and it has
the diagonal property.
By \cite{QSV}*{\S~5.6}, one has $H^\bullet(\calG,\F_p)\simeq\Lambda_\bullet(\Gamma^{\mathrm{op}})$,
which is strongly and universally Koszul, and PBW.
\end{exam}

\begin{exam}\label{exam:propraag2}\rm
Let $\calG$ be the pro-$p$ group with presentation
\[
 \calG\simeq\left\langle v_0,\ldots,v_4\mid [v_0,v_i]=v_0^{\alpha_i}v_i^{\alpha_i},[v_1,v_j]=v_1^{\alpha'}v_j^{\beta'},
 [v_4,v_j]=v_4^{\alpha''}v_j^{\beta''}\right\rangle_{\hat p}\]
with $i=1,4$, $j=2,3$,
and $\alpha_i,\beta_i,\alpha',\beta',\alpha'',\beta''\in p^\epsilon\Z_p$, where $\epsilon=1$ if $p>2$, $\epsilon=2$ otherwise.
Then $\calG$ is a generalized pro-$p$ RAAG with associated graph
  \[
  \begin{minipage}{0.1\textwidth}
    \begin{center}$\Gamma$ = \end{center}
   \end{minipage}
   \begin{minipage}{0.3\textwidth}
    \xymatrix@R=0.5truecm{   & v_1\ar@{-}[d]\ar@{-}[rd]\ar@{-}[ld] &  \\ v_2\ar@{-}[rd] & v_0\ar@{-}[d] & v_3\ar@{-}[ld]
    \\  & v_4 &  }
   \end{minipage}
\]
The graph $\Gamma$ does not have the diagonal property.
By \cite{QSV}*{\S~5.6}, one has $H^\bullet(\calG,\F_p)\simeq\Lambda_\bullet(\Gamma^{\mathrm{op}})$,
which is strongly Koszul and PBW, but not universally Koszul.
\end{exam}

One may find a further class of pro-$p$ groups whose $\F_p$-cohomology algebras
are both strongly and universal Koszul.
A finitely generated pro-$p$ group is called {\sl uniform} if it is torsion-free and $\Phi(\calG)\subseteq\bar{\calG}^{p^\epsilon}$,
where $\bar\calG^{p^\epsilon}$ denotes the closed subgroup of $\calG$ generated by the $p^\epsilon$-powers
of elements of $\calG$, where $\epsilon=1$ if $p>2$ and $\epsilon=2$ otherwise (cf. \cite{ddsms}*{\S~4.1}).
For these pro-$p$ groups one has the following.

\begin{prop}\label{prop:uniform}
Let $\calG$ be a uniform pro-$p$ group.
Then the $\F_p$-cohomology algebra $H^\bullet(\calG,\F_p)$ is strongly and universally Koszul, and PBW.
\end{prop}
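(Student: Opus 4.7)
The plan is to reduce Proposition~\ref{prop:uniform} to the case of the exterior Stanley-Reisner algebra of a complete graph, and then invoke the results already proved in this paper. The key input from the theory of analytic pro-$p$ groups is Lazard's theorem: if $\calG$ is a uniform pro-$p$ group of dimension $d$, then the $\F_p$-cohomology algebra is naturally isomorphic, as a graded $\F_p$-algebra, to the exterior algebra
\[
H^\bullet(\calG,\F_p)\simeq \Lambda_\bullet\bigl(H^1(\calG,\F_p)\bigr),
\]
where $V=H^1(\calG,\F_p)$ has dimension $d$. (The choice of $\epsilon=1$ for $p$ odd and $\epsilon=2$ for $p=2$ in the definition of uniform is precisely what guarantees this identification; see \cite{ddsms}*{Ch.~5}.) Equivalently, a uniform pro-$p$ group has the same $\F_p$-cohomology as the free abelian pro-$p$ group $\Z_p^d$.

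Next I would observe that the exterior algebra $\Lambda_\bullet(V)$ of a $d$-dimensional vector space is canonically isomorphic to the exterior Stanley-Reisner algebra $\Lambda_\bullet(K_d^{\op})$ associated to the complete graph $K_d$ on $d$ vertices. Indeed, if $\Gamma$ is complete then the defining ideal $\Omega$ in Definition~\ref{defin:lambda gamma} is trivial, so $\Lambda_\bullet(\Gamma^{\op})=\Lambda_\bullet(V)$.

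With this identification in hand, the three claimed properties follow directly from the results already established. Strong Koszulity is Theorem~\ref{thm:strong testo} applied to $\Gamma=K_d$. The PBW property is Corollary~\ref{thm_PBW} applied to $\Gamma=K_d$, and the dual basis of $H^1(\calG,\F_p)$ provides a set of PBW-generators. Finally, $K_d$ clearly contains no full subgraph isomorphic to $C_4$ or $P_4$ (every four vertices span a complete subgraph, not a 4-cycle nor a path), so $K_d$ has the diagonal property, and universal Koszulity follows from Theorem~\ref{thm:RAAGs UK}.

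The only non-routine step is the appeal to Lazard's theorem; everything else is a direct invocation of the results proved earlier in the paper. For $p$ odd this identification is classical, while for $p=2$ the stronger hypothesis $\Phi(\calG)\subseteq\bar{\calG}^{4}$ in the definition of uniform is exactly what is needed to avoid the Bockstein terms that would otherwise spoil the exterior structure, so no further obstruction arises.
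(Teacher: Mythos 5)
Your proposal is correct and follows essentially the same route as the paper: the authors also reduce to $H^\bullet(\calG,\F_p)\simeq\Lambda_\bullet(H^1(\calG,\F_p))$ (citing Symonds--Weigel rather than Lazard directly) and then apply Theorem~\ref{thm:strong testo} and Theorem~\ref{thm:RAAGs UK}, the exterior algebra being the Stanley--Reisner algebra of a complete graph, which has the diagonal property. Your explicit invocation of Corollary~\ref{thm_PBW} for the PBW claim is a minor addition the paper leaves implicit, but the argument is the same.
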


\begin{proof}
Set $V=H^1(\calG,\F_p)$.
Then by \cite{sw}*{Thm.~5.1.5} one has $H^\bullet(\calG,\F_p)\simeq\Lambda_\bullet(V)$.
The claim follows from Theorem~\ref{thm:strong testo} and Theorem~\ref{thm:RAAGs UK}.
\end{proof}

%%%%%%%%%%%%%%%%%%%%%%%%%%%%%%%%%%%%%%%%%%%%%%%%%%%%%%%%%%%%%%%%

\subsection{Maximal pro-$p$ Galois groups}

In this subsection $\K$ will denote a field containing a root of 1 of order $p$.
By the positive answer to the {\sl Bloch-Kato conjecture} given by M.~Rost and V.~Voevodsky (cf. \cites{rost,voev,weibel}),
one knows that the maximal pro-$p$ Galois group $\calG_{\K}$ of $\K$ has quadratic $\F_p$-cohomology
algebra (see, e.g., \cite{cq:bk}*{\S~2}).
In \cite{MPPT} it is conjectured that $\calG_{\K}$ has universally Koszul $\F_p$-cohomology algebra
(cf. Conjecture~\ref{conj:univ}).

\begin{prop}\label{prop:Galois}
 If $\Gamma$ is a graph with the diagonal property,
then for any prime $p$ the pro-$p$ RAAG $\calG_\Gamma$ associated to $\Gamma$ occurs as the maximal pro-$p$ Galois group $\calG_{\K}$
for some field $\K$ containing a root of 1 of order $p$.
\end{prop}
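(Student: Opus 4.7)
The plan is to proceed by induction on the construction of $G_\Gamma$ as a RAAG of elementary type. By Proposition~\ref{thm:ETRAAGs}, the diagonal property on $\Gamma$ is equivalent to $G_\Gamma$ being of elementary type in the sense of Definition~\ref{defin:ETRAAGs}; passing to pro-$p$ completions, $\calG_\Gamma$ is then built from copies of $\Z_p$ by iterating two operations: free pro-$p$ products, and direct products with $\Z_p$. It therefore suffices to verify that the class of pro-$p$ groups arising as $\calG_\K$ for some field $\K$ containing $\mu_{p^\infty}$ contains $\Z_p$ and is closed under these two operations. Note that this is a strict reinforcement of the statement, since every such $\K$ contains in particular a root of $1$ of order $p$.

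For the base case, $\Z_p$ is realized as the maximal pro-$p$ Galois group of $\K_0=k((t))$, where $k$ is any algebraically closed field of characteristic $\neq p$: indeed $\mu_{p^\infty}\subseteq k\subseteq\K_0$, and the maximal pro-$p$ extension of $\K_0$ is the tower $\bigcup_n k((t^{1/p^n}))$, so $\calG_{\K_0}\simeq\Z_p$. Closure under free pro-$p$ products is a classical result in the Galois theory of pro-$p$ groups (due to Ershov, Haran, and Efrat): given fields $\K_1,\K_2\supseteq\mu_{p^\infty}$ with maximal pro-$p$ Galois groups $\calG_{\K_i}$, a suitable amalgamation/pseudo-algebraically-closed-field construction produces a field $\K\supseteq\mu_{p^\infty}$ with $\calG_\K\simeq\calG_{\K_1}\ast_{\hat p}\calG_{\K_2}$. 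Closure under direct product with $\Z_p$ is obtained by passing from $\K$ to the field $\K((t))$ of formal Laurent series: when $\mu_{p^\infty}\subseteq\K$, the natural short exact sequence
\[
 1\longrightarrow\calG_\K\longrightarrow\calG_{\K((t))}\longrightarrow\Z_p\longrightarrow 1
\]
splits as a direct product $\calG_\K\times\Z_p$, because the cyclotomic action of the inertia quotient on $\mu_{p^\infty}$ is trivial.

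The main obstacle is precisely the need to maintain the containment $\mu_{p^\infty}\subseteq\K$ along the induction: without it, the ``cone'' step in the construction of $\Gamma$ would realize $\calG\times\Z_p$ only up to a twist by the cyclotomic character, producing a semi-direct product $\calG\rtimes\Z_p$ rather than the desired direct product. Starting the induction from $\K_0=k((t))$ with $k$ algebraically closed, and observing that both the amalgamation construction behind the free pro-$p$ product step and the Laurent-series construction preserve the containment of all $p$-power roots of unity, ensures that the entire inductive realization stays within the subclass of fields containing $\mu_{p^\infty}$. This yields the claimed field $\K$ with $\calG_\Gamma\simeq\calG_\K$.
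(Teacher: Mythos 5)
Your proof is correct and follows essentially the same route as the paper: induction on the elementary-type construction of $\calG_\Gamma$, realizing $\Z_p$ over a Laurent series field, invoking the known realizability of free pro-$p$ products of realizable groups, and passing to $\K(\!(X)\!)$ for the cone step. Your explicit insistence on carrying $\mu_{p^\infty}\subseteq\K$ through the induction is in fact a welcome refinement: the paper asserts $\calG_{\K(\!(X)\!)}\simeq\calG_{\K}\times\Z_p$ under the weaker hypothesis that $\K$ contains a root of unity of order $p$, which in general only yields a semidirect product twisted by the cyclotomic character.
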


\begin{proof}
If a finitely generated pro-$p$ group $\calG$ occurs as the maximal pro-$p$ Galois group $\calG_{\K}$
for some field $\K$ containing a root of 1 of order $p$,
then one has $\calG\times\Z_p\simeq\calG_{\K(\!(X)\!)}$, with $\K(\!(X)\!)$ the field of Laurent series
in one indeterminate $X$ with coefficients in $\K$. In particular, 
$\Z_p=\{1\}\times\Z_p\simeq \calG_{\K(\!(X)\!)}$ with $\K$ a $p$-closed field (i.e., $\calG_{\K}=\{1\}$).
On the other hand, if two finitely generated pro-$p$ groups $\calG_1,\calG_2$ occur as maximal pro-$p$ Galois groups,
then also their free product is realizable as the maximal pro-$p$ Galois group for some field $\K$ (cf. \cite{Ido:ET}*{Rem.~3.4}).

We proceed by induction on the number $d$ of vertices of $\Gamma$.
If $\Gamma$ has the diagonal property, then $\calG_\Gamma$ is constructible starting from free abelian pro-$p$ groups
by operating direct product with $\Z_p$ and free products.
Therefore, if $d=1$ then $\calG_\Gamma\simeq\Z\simeq\calG_{\K}$ for some suitable $\K$.
If $d\geq2$, then either $\calG_\Gamma\simeq\calG_{\Gamma_1}\ast_{\hat p}\calG_{\Gamma_2}$, with $\Gamma=\Gamma_1\dot\cup\Gamma_2$;
or $\calG_\Gamma\simeq\Z\times\calG_{\tilde\Gamma}$, with $\Gamma$ the cone graph with basis $\tilde\Gamma$
and the claim follows by the above argument.
\end{proof}

\begin{rem}\rm
 If $\Gamma$ is a graph with the diagonal property,
 then the associated pro-$p$ RAAG $\calG_\Gamma$ is a {\sl pro-$p$ group of elementary type}
(cf. \cite{Ido:ET}*{\S~3} and \cite{MPQT}*{\S~4}).
\end{rem}

On the other hand, it was recently shown that the only pro-$p$ RAAGs which may occur as maximal pro-$p$ Galois groups of fields containing a root of 1 of order $p$ are those associated to graphs with the diagonal property.
Indeed, let $C_4$ and $P_4$ be as in \eqref{eq:C P}.
The pro-$p$ RAAG $\calG_{C_4}$ associated to $C_4$ is isomorphic to the direct product
$\mathcal{F}\times\mathcal{F}$, with $\mathcal{F}$ a 2-generated free pro-$p$ group, and by \cite{cq:bk}*{Thm.~5.6}, $\calG_{C_4}$ --- and therefore also any other pro-$p$ RAAG $\calG_\Gamma$
with $\Gamma$ containing $C_4$ as full subgraph, --- is not realizable as the maximal pro-$p$ Galois group $\calG_{\K}$
for any $\K$.
Moreover, I.~Snopce and P.~Zalesskii recently proved that $\calG_{P_4}$ --- and therefore also any other pro-$p$ RAAG $\calG_\Gamma$
with $\Gamma$ containing $P_4$ as full subgraph, --- is not realizable as the maximal pro-$p$ Galois group $\calG_{\K}$
for any $\K$ (cf. \cite{SZ}*{Thm.~1.2}).

From this, together with Proposition~\ref{prop:Galois} and Theorem~\ref{thm:prop RAAGs}, one deduces Corollary~\ref{conj:univ}.

\begin{rem}\rm
Unlike pro-$p$ RAAGs, not every generalized pro-$p$ RAAG with associated graph a graph with the diagonal property
occurs as the maximal pro-$p$ Galois group $\calG_{\K}$ of a field $\K$ containing a root of 1 of order $p$,
as shown by the following examples.
\begin{itemize}
 \item[(a)] Let $\calG$ be as in Example~\ref{exam:prop star}.
 If $\alpha_i\neq\alpha_j$ for some $i,j$,
then $\calG\not\simeq\calG_{\K}$ for any field $\K$ containing a root of 1 of order $p$ by \cite{eq:kummer}*{Ex.~8.3}. 
\item[(b)] Let $\calG$ be as in Example~\ref{exam:propraag1}.
If $\alpha_1=\beta_4=p$ and $\beta_1=\alpha_4=0$ then $\calG\not\simeq\calG_{\K}$ for any field $\K$ containing
a root of 1 of order $p$ by \cite{QSV}*{Thm.~5.29}.
\end{itemize}
\end{rem}

%%%%%%%%%%%%%%%%%%%%%%%%%%%%%%%%%%%%%%%%%%%%%%%%%%%%%%%%%%%%%%%%

\begin{bibdiv}
\begin{biblist}

\bib{raags}{article}{
   author={Charney, R.},
   title={An introduction to right-angled Artin groups},
   journal={Geom. Dedicata},
   volume={125},
   date={2007},
   pages={141--158},
   issn={0046-5755},
}

%\bib{cmq:fast}{article}{
%   author={Chebolu, S. K.},
%   author={Min\'{a}\v{c}, J.},
%   author={Quadrelli, C.},
%   title={Detecting fast solvability of equations via small powerful Galois
%   groups},
 %  journal={Trans. Amer. Math. Soc.},
%   volume={367},
%   date={2015},
%   number={12},
%   pages={8439--8464},
%   issn={0002-9947},
%}
\bib{conca0}{article}{
   author={Conca, A.},
   title={Gr\"{o}bner bases for spaces of quadrics of low codimension},
   journal={Adv. in Appl. Math.},
   volume={24},
   date={2000},
   number={2},
   pages={111--124},
   issn={0196-8858},
}

\bib{conca:UK}{article}{
   author={Conca, A.},
   title={Universally Koszul algebras},
   journal={Math. Ann.},
   volume={317},
   date={2000},
   number={2},
   pages={329--346},
   issn={0025-5831},
}

%\bib{conca}{article}{
%   author={Conca, A.},
%   title={Universally Koszul algebras defined by monomials},
%   journal={Rend. Sem. Mat. Univ. Padova},
%   volume={107},
%   date={2002},
%   pages={95--99},
%}

\bib{cdr}{article}{
   author={Conca, A.},
   author={De Negri, E.},
   author={Rossi, M.E.},
   title={Koszul algebras and regularity},
   conference={
      title={Commutative algebra},
   },
   book={
      publisher={Springer, New York},
   },
   date={2013},
   pages={285--315},
}

\bib{ctv}{article}{
   author={Conca, A.},
   author={Trung, N. V.},
   author={Valla, G.},
   title={Koszul property for points in projective spaces},
   journal={Math. Scand.},
   volume={89},
   date={2001},
   number={2},
   pages={201--216},
  issn={0025-5521},
 }

\bib{graph:book}{book}{
   author={Diestel, R.},
   title={Graph theory},
   series={Graduate Texts in Mathematics},
   volume={173},
   edition={5},
   publisher={Springer, Berlin},
   date={2018},
   pages={xviii+428},
   isbn={978-3-662-57560-4},
   isbn={978-3-662-53621-6},
}

\bib{ddsms}{book}{
   author={Dixon, J.D.},
   author={du Sautoy, M.P.F.},
   author={Mann, A.},
   author={Segal, D.},
   title={Analytic pro-$p$ groups},
   series={Cambridge Studies in Advanced Mathematics},
   volume={61},
   edition={2},
   publisher={Cambridge University Press, Cambridge},
   date={1999},
   pages={xviii+368},
   isbn={0-521-65011-9},
}

\bib{Droms}{article}{
   author={Droms, C.},
   title={Subgroups of graph groups},
   journal={J. Algebra},
   volume={110},
   date={1987},
   number={2},
   pages={519--522},
   issn={0021-8693},
}

\bib{Ido:ET}{article}{
   author={Efrat, I.},
   title={Small maximal pro-$p$ Galois groups},
   journal={Manuscripta Math.},
   volume={95},
   date={1998},
   number={2},
   pages={237--249},
   issn={0025-2611},
}

\bib{eq:kummer}{article}{
   author={Efrat, I.},
   author={Quadrelli, C.},
   title={The Kummerian property and maximal pro-$p$ Galois groups},
   journal={J. Algebra},
   volume={525},
   date={2019},
   pages={284--310},
   issn={0021-8693},
   }

\bib{ehh}{article}{
   author={Ene, V.},
   author={Herzog, J.},
   author={Hibi, T.},
   title={Linear flags and Koszul filtrations},
   journal={Kyoto J. Math.},
   volume={55},
   date={2015},
   number={3},
   pages={517--530},
   issn={2156-2261},
}

\bib{HHR}{article}{
   author={Herzog, J.},
   author={Hibi, T.},
   author={Restuccia, G.},
   title={Strongly Koszul algebras},
   journal={Math. Scand.},
   volume={86},
   date={2000},
   number={2},
   pages={161--178},
   issn={0025-5521},
}

\bib{froberg}{article}{
   author={Fr\"{o}berg, R.},
   title={Determination of a class of Poincar\'{e} series},
   journal={Math. Scand.},
   volume={37},
   date={1975},
   number={1},
   pages={29--39},
   issn={0025-5521},
}

\bib{KW}{article}{
   author={Kropholler, R.},
   author={Wilkes, G.},
   title={Profinite properties of RAAGs and special groups},
   journal={Bull. Lond. Math. Soc.},
   volume={48},
   date={2016},
   number={6},
   pages={1001--1007},
   issn={0024-6093},
}

\bib{lodval}{book}{
   author={Loday, J.-L.},
   author={Vallette, B.},
   title={Algebraic operads},
   series={Grundlehren der Mathematischen Wissenschaften [Fundamental
   Principles of Mathematical Sciences]},
   volume={346},
   publisher={Springer, Heidelberg},
   date={2012},
   pages={xxiv+634},
   isbn={978-3-642-30361-6},
}

\bib{lorensen}{article}{
   author={Lorensen, K.},
   title={Groups with the same cohomology as their pro-$p$ completions},
   journal={J. Pure Appl. Algebra},
   volume={214},
   date={2010},
   number={1},
   pages={6--14},
   issn={0022-4049},
}

\bib{MPPT}{article}{
   author={Min\'{a}\v{c}, J.},
   author={Palaisti, M.},
   author={Pasini, F. W.},
   author={T\^{a}n, N. D.},
   title={Enhanced Koszul properties in Galois cohomology},
   journal={Res. Math. Sci.},
   volume={7},
   date={2020},
   number={2},
}

\bib{MPQT}{article}{
   author={Min{\'a}{\v c}, J.},
   author={Pasini, F.W.},
   author={Quadrelli, C.},
   author={T{\^a}n, N.D.},
   title={Koszul algebras and quadratic duals in Galois cohomology},
   journal={Adv. Math.},
   date={2020},
   note={To appear, available at {\tt arXiv:1808.01695}},
}

\bib{nsw:cohn}{book}{
   author={Neukirch, J.},
   author={Schmidt, A.},
   author={Wingberg, K.},
   title={Cohomology of number fields},
   series={Grundlehren der Mathematischen Wissenschaften [Fundamental
   Principles of Mathematical Sciences]},
   volume={323},
   edition={2},
   publisher={Springer-Verlag, Berlin},
   date={2008},
   pages={xvi+825},
   isbn={978-3-540-37888-4},
}

\bib{papa}{article}{
   author={Papadima, S.},
   author={Suciu, A. I.},
   title={Algebraic invariants for right-angled Artin groups},
   journal={Math. Ann.},
   volume={334},
   date={2006},
   number={3},
   pages={533--555},
   issn={0025-5831},
}

\bib{piont}{article}{
   author={Piontkovski\u{\i}, D. I.},
   title={Koszul algebras and their ideals},
   language={Russian, with Russian summary},
   journal={Funktsional. Anal. i Prilozhen.},
   volume={39},
   date={2005},
   number={2},
   pages={47--60, 95},
   issn={0374-1990},
   translation={
      journal={Funct. Anal. Appl.},
      volume={39},
      date={2005},
      number={2},
      pages={120--130},
      issn={0016-2663},
   },
}
   
\bib{pp:quad}{book}{
   author={Polishchuk, A.},
   author={Positselski, L.},
   title={Quadratic algebras},
   series={University Lecture Series},
   volume={37},
   publisher={American Mathematical Society, Providence, RI},
   date={2005},
   pages={xii+159},
   isbn={0-8218-3834-2},
}

\bib{posi:koszul}{article}{
   author={Positselski, L.},
   title={Koszul property and Bogomolov's conjecture},
   journal={Int. Math. Res. Not.},
   date={2005},
   number={31},
   pages={1901--1936},
   issn={1073-7928},
}

\bib{posi:conj}{article}{
   author={Positselski, L.},
   title={Galois cohomology of a number field is Koszul},
   journal={J. Number Theory},
   volume={145},
   date={2014},
   pages={126--152},
   issn={0022-314X},
}

\bib{posi:b}{article}{
   author={Positselski, L.},
   title={Koszulity of cohomology = $K(\pi,1)$-ness + quasi-formality},
   journal={J. Algebra},
   volume={483},
   date={2017},
   pages={188--229},
   issn={0021-8693},
}

\bib{polivis}{article}{
   author={Positselski, L.},
   author={Vishik, A.},
   title={Koszul duality and Galois cohomology},
   journal={Math. Res. Lett.},
   volume={2},
   date={1995},
   number={6},
   pages={771--781},
   issn={1073-2780},
}

\bib{priddy}{article}{
   author={Priddy, S.B.},
   title={Koszul resolutions},
   journal={Trans. Amer. Math. Soc.},
   volume={152},
   date={1970},
   pages={39--60},
   issn={0002-9947},
}
\bib{cq:bk}{article}{
   author={Quadrelli, C.},
   title={Bloch-Kato pro-$p$ groups and locally powerful groups},
   journal={Forum Math.},
   volume={26},
   date={2014},
   number={3},
   pages={793--814},
   issn={0933-7741},
}

\bib{cq:2rel}{article}{
   author={Quadrelli, C.},
   title={Pro-$p$ groups with few relations and Universal Koszulity},
   date={2020},
   journal={Math. Scand.},
   note={To appear, available at {\tt arXiv:2003.09186}},
}

\bib{cq:onerel}{article}{
   author={Quadrelli, C.},
   title={One-relator maximal pro-$p$ Galois groups and the Koszulity conjectures},
   date={2020},
   journal={Q. J. Math.},
   note={To appear, available at {\tt arXiv:1601.04480}},
}

\bib{QSV}{unpublished}{
   author={Quadrelli, C.},
   author={Snopce, I.},
   author={Vannacci, M.},
   title={On pro-{$p$} groups with quadratic cohomology},
   date={2019},
   note={Available at {\tt arXiv:1906.04789}},
}

\bib{ribzal}{book}{
   author={Ribes, L.},
   author={Zalesskii, P.},
   title={Profinite groups},
   series={Ergebnisse der Mathematik und ihrer Grenzgebiete. 3. Folge. A
   Series of Modern Surveys in Mathematics [Results in Mathematics and
   Related Areas. 3rd Series. A Series of Modern Surveys in Mathematics]},
   volume={40},
   edition={2},
   publisher={Springer-Verlag, Berlin},
   date={2010},
   pages={xvi+464},
   isbn={978-3-642-01641-7},
}

\bib{rost}{article}{
   author={Rost, M.},
   title={Norm varieties and algebraic cobordism},
   conference={
      title={Proceedings of the International Congress of Mathematicians,
      Vol. II},
      address={Beijing},
      date={2002},
   },
   book={
      publisher={Higher Ed. Press, Beijing},
   },
   date={2002},
   pages={77--85},
}

\bib{SZ}{unpublished}{
   author={Snopce, I.},
   author={Zalesskii, P. A.},
   title={Right-angled Artin pro-{$p$} groups},
   date={2020},
   note={Available at {\tt arXiv:2005.01685}},
}

\bib{sw}{article}{
   author={Symonds, P.},
   author={Weigel, Th.},
   title={Cohomology of $p$-adic analytic groups},
   conference={
      title={New horizons in pro-$p$ groups},
   },
   book={
      series={Progr. Math.},
      volume={184},
      publisher={Birkh\"{a}user Boston, Boston, MA},
   },
   date={2000},
   pages={349--410},
}

\bib{voev}{article}{
   author={Voevodsky, V.},
   title={On motivic cohomology with $\bold Z/l$-coefficients},
   journal={Ann. of Math. (2)},
   volume={174},
   date={2011},
   number={1},
   pages={401--438},
   issn={0003-486X},
}

\bib{weibel}{article}{
   author={Weibel, Ch.},
   title={2007 Trieste lectures on the proof of the Bloch-Kato conjecture},
   conference={
      title={Some recent developments in algebraic $K$-theory},
   },
   book={
      series={ICTP Lect. Notes},
      volume={23},
      publisher={Abdus Salam Int. Cent. Theoret. Phys., Trieste},
   },
   date={2008},
   pages={277--305},
}

\bib{weigel:koszul}{article}{
   author={Weigel, Th.},
   title={Graded Lie algebras of type FP},
   journal={Israel J. Math.},
   volume={205},
   date={2015},
   number={1},
   pages={185--209},
   issn={0021-2172},
}

\bib{wolk}{article}{
   author={Wolk, E. S.},
   title={A note on ``The comparability graph of a tree''},
   journal={Proc. Amer. Math. Soc.},
   volume={16},
   date={1965},
   pages={17--20},
   issn={0002-9939},
}

\end{biblist}
\end{bibdiv}
\end{document}